\theoremstyle{plain}
\newtheorem{theorem}{Theorem}[section]
\newtheorem{lemma}[theorem]{Lemma}
\newtheorem{definition}[theorem]{Definition}
\newtheorem{proposition}[theorem]{Proposition}
\newtheorem{corollary}[theorem]{Corollary}
\newtheorem{conjecture}[theorem]{Conjecture}
\theoremstyle{remark}
\newtheorem*{acknowledgment}{Acknowledgment}
\numberwithin{equation}{section}
\newcommand{\seclabel}[1]{\label{sec:#1}}   
\newcommand{\Rmnum}[1]{\expandafter\@slowromancap\romannumeral #1@}
\providecommand{\f}{\ensuremath{\frac{1}{2}}}
\newcommand{\Inn}{\mathrm{Inn}}
\newcommand{\Mlt}{\mathrm{Mlt}}
\newcommand{\Aut}{\mathrm{Aut}}
\title{Automorphic loops and metabelian groups}
\author[M. Greer]{Mark Greer}
\author[L. Raney]{Lee Raney}
\address{Department of Mathematics\\
One Harrison Plaza \\
University of North Alabama\\
Florence, AL 35632 USA }
\email{\url{mgreer@una.edu}}
\email{\url{lraney@una.edu}}
\subjclass[2010]{20N05}
\keywords{metabelian groups, automorphic loops, uniquely $2$-divisible}
\begin{document}
\allowdisplaybreaks

\begin{abstract}
Given a uniquely 2-divisible group $G$, we study a commutative loop $(G,\circ)$ which arises as a result of a construction in \cite{baer}. We investigate some general properties and applications of $\circ$ and determine a necessary and sufficient condition on $G$ in order for $(G, \circ)$ to be Moufang. In \cite{greer14}, it is conjectured that $G$ is metabelian if and only if $(G, \circ)$ is an automorphic loop. We answer a portion of this conjecture in the affirmative: in particular, we show that if $G$ is a split metabelian group of odd order, then $(G, \circ)$ is automorphic.
\end{abstract}

\maketitle

\section{Introduction}
\seclabel{intro}
A loop $(Q,\cdot)$ consists of a set $Q$ with a binary operation $\cdot : Q\times Q\to Q$ such that (i) for all $a,b\in Q$, the equations $ax = b$ and $ya = b$ have unique solutions $x,y\in Q$, and (ii) there exists $1\in Q$ such that $1x = x1 = x$ for all $x\in Q$.  Standard references for loop theory are \cite{bruck71,pflugfelder90}.

Let $G$ be a uniquely $2$-divisible group, that is, a group in which the map $x\mapsto x^2$ is a bijection. On $G$ we define a new binary operations as follows:
\begin{align}
x\circ y &= xy[y,x]^{1/2}\,.   \label{eqn:groupgamma}
\end{align}
Here $a^{1/2}$ denotes the unique $b\in G$ satisfying $b^2 = a$ and $[y,x] = y^{-1}x^{-1}yx$. Though it is not obvious, $(G,\circ)$ is a commutative loop with neutral element $1$.  Moreover, this loop is \emph{power-associative}, which informally means that integer powers of elements can be defined unambiguously, and powers in $G$ and powers in $(G,\circ)$ coincide. It turns out that $(G,\circ)$ lives in a variety of loops called $\Gamma$-loops (defined in {\S}2 2), which include commutative RIF loops \cite{KV09} and commutative automorphic loops \cite{JKV11}.

If $G$ is nilpotent of class at most $2$, then $(G,\circ)$ is an abelian group. In this case, the passage from $G$ to $(G,\circ)$ is called the ``Baer trick'' \cite{isaacs}.  This construction seems to first appear in \cite{baer}. It was utilized by Bender in \cite{bender} to provide an alternative proof of the following result due to Thompson in \cite{thompson}.

\begin{theorem}
Let $p$ be an odd prime and let $A$ be the semidirect product of a $p$-subgroup $P$ with a normal $p'$-subgroup $Q$. Suppose that $A$ acts on a $p$-group $G$ such that
\[C_G(P) \leq C_G(Q).\]
Then $Q$ acts trivially on $G$.
\end{theorem}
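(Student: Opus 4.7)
The strategy is Bender's: by induction on $|G|$, reduce to the case where $G$ has nilpotency class at most $2$, and then transport the problem to an abelian group via the Baer trick. Take $G$ to be a minimal counterexample. Since $A$ permutes every characteristic subgroup and every $A$-invariant quotient of $G$, and the hypothesis $C_G(P)\le C_G(Q)$ is inherited by the relevant subquotients, a standard argument on the lower central series (using the Three Subgroups Lemma to relate commutators with $P$ and with $Q$) forces $Q$ to act trivially modulo $\gamma_3(G)$, and then a short calculation with the commutator identities eliminates $\gamma_3(G)$ itself. One may therefore assume $G$ has class at most $2$.

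Now the Baer trick enters. Since $p$ is odd, $G$ has odd order and is uniquely $2$-divisible; because $G$ has class at most $2$, the loop $(G,\circ)$ built from \eqnref{groupgamma} is in fact an abelian group on the same underlying set (this is the classical observation from \cite{baer} that motivates the whole construction). The operation $\circ$ is constructed from the group operations and the square-root bijection alone, so every automorphism of $G$ is an automorphism of $(G,\circ)$; the action of $A$ therefore carries over, and pointwise fixed-point sets coincide: $C_{(G,\circ)}(X)=C_G(X)$ as subsets, for any $X\le A$.

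On the abelian $p$-group $(G,\circ)$ the coprime action of the $p'$-group $Q$ yields a Maschke-style decomposition
\[
(G,\circ)\;=\;C_{(G,\circ)}(Q)\,\oplus\,[(G,\circ),Q].
\]
The hypothesis reads $C_{(G,\circ)}(P)\le C_{(G,\circ)}(Q)$, and hence $C_{(G,\circ)}(P)\cap [(G,\circ),Q]=0$. Setting $V=[(G,\circ),Q]$, this says the $p$-group $P$ acts on the abelian $p$-group $V$ with trivial fixed-point set. But a $p$-group acting on a nontrivial finite $p$-group always has nontrivial fixed points, so $V=0$. Therefore $Q$ centralizes $(G,\circ)$ and so acts trivially on $G$.

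The main obstacle is the reduction to class $\le 2$: arranging the induction so that the hypothesis descends cleanly to the relevant quotient, and so that the information about how $Q$ acts on $\gamma_3(G)$ is actually recoverable from its action modulo $\gamma_3(G)$, is where essentially all the work lies. Once inside the abelian loop $(G,\circ)$, the remaining argument is essentially linear algebra and is very short.
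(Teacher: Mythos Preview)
This theorem is not proved in the paper; it is merely quoted in the introduction as historical motivation, with attribution to Thompson \cite{thompson} and the remark that Bender \cite{bender} gave an alternative proof exploiting the construction \eqnref{groupgamma}. There is therefore no proof in the paper against which to compare your attempt.

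Your outline does follow Bender's strategy, and the portion from ``class at most $2$'' onward is correct and clean. The reduction step, however, is misdescribed in a way that matters. You write that the hypothesis $C_G(P)\le C_G(Q)$ ``is inherited by the relevant subquotients,'' but for quotients this is false in general: $P$ is a $p$-group acting on a $p$-group, so the coprime identity $C_{G/N}(P)=C_G(P)N/N$ is unavailable, and $C_{G/N}(P)$ can properly contain the image of $C_G(P)$. Your proposed induction on $G/\gamma_3(G)$ therefore does not go through as stated. The actual reduction uses only $A$-invariant \emph{subgroups}, where the hypothesis obviously restricts. One argues: if $[G,Q]<G$ then induction applied to the $A$-invariant subgroup $[G,Q]$, together with the coprime identity $[G,Q]=[G,Q,Q]$, gives $[G,Q]=1$; so assume $G=[G,Q]$. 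By induction $Q$ centralises the proper $A$-invariant subgroup $G'$, whence $[Q,G',G]=1$ and $[G',G,Q]=1$; the Three Subgroups Lemma then yields $[G,Q,G']=1$, i.e.\ $[G,G']=1$. Thus $G$ already has class at most $2$, with no quotient ever invoked, and your Baer-trick argument finishes the proof.
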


Our goal is to study $(G,\circ)$ with different restrictions on $G$.  We show that $(G,\circ)$ is a commutative Moufang loop \emph{if and only if} $G$ is uniquely 2-divisible 2-Engel (Theorem \ref{moufang}) and give an alternative proof to Baer that if $(G,\circ)$ is an abelian group then $G$ has nilpotency class at most 2 
(Corollary \ref{newbaer}).  Our main result is that if $G$ is uniquely 2-divisible split-metabelian then $(G,\circ)$ is a commutative automorphic loop (Theorem \ref{splitmeta}).  Finally we end with some general facts about $(G,\circ)$ when $G$ is metabelian and open problems.

\section{Preliminaries}
\seclabel{background}
To avoid excessive parentheses, we use the following convention:
\begin{itemize}
\item multiplication $\cdot$ will be less binding than divisions $\backslash, /$.
\item divisions are less binding than juxtaposition
\end{itemize}
For example $xy/z \cdot y\backslash xy$ reads as $((xy)/z)(y\backslash (xy))$.  To avoid confusion when both $\cdot$ and  $\circ$ are in a calculation, we denote divisions by $\backslash_{\cdot}$ and $\backslash_{\circ}$ respectively.

In a loop \emph{Q}, the left and right translations by $x \in Q$ are defined by $yL_{x} = xy$ and $yR_{x} = yx$ respectively.  We thus have $\backslash,/$ as $x\backslash y=yL_{x}^{-1}$ and $y/x=yR_{x}^{-1}$.  We define the \emph{left section} of $Q$, $L_{Q}=\{L_{x}\mid x\in Q\}$, \emph{left multiplication group} of $Q$, $\Mlt_{\lambda}(Q)= \left\langle L_{x}\mid x\in Q\right\rangle$ and \emph{multiplication group} of $Q$, $\Mlt(Q)=\left\langle R_{x},L_{x}\mid x\in Q\right\rangle$.  We define the \emph{inner mapping group} of $Q$, $\Inn(Q)=\Mlt(Q)_{1}= \{\theta\in \Mlt(Q) \mid 1\theta=1\}$.  It is well known that $\Inn(Q)$ has the standard generators $L_{x,y}, R_{x,y},$ and $T_x$ (see \cite{bruck71}) where
\[L_{x,y} = L_xL_yL_{yx}^{-1}\qquad R_{x,y} = R_xR_yR_{xy}^{-1} \qquad T_x = R_xL_x^{-1}.\]

A loop \emph{Q} is an \emph{automorphic loop} if every inner mapping of $Q$ is an automorphism of $Q$, $\Inn(Q) \leq \Aut(Q)$.  A loop is Moufang if it satisfies $xy\cdot zx = x(yz\cdot x)$ and is a Bruck loop if it satisfies both $x(y \cdot xz) = (x\cdot yx)z$ and $(xy)^{-1} = x^{-1}y^{-1}$ where $x^{-1}$ is the unique two-sided inverse of $x$.

\begin{definition}
A loop $(Q,\cdot)$ is a $\Gamma$-loop if the following hold
\begin{itemize}
\item[($\Gamma_{1}$)]\quad $Q$ is commutative.
\item[($\Gamma_{2}$)]\quad $Q$ has the automorphic inverse property (AIP):\quad $\forall x,y\in Q$, $(xy)^{-1}=x^{-1}y^{-1}$.
\item[($\Gamma_{3}$)]\quad $\forall x\in Q$, $L_{x}L_{x^{-1}}=L_{x^{-1}}L_{x}$.
\item[($\Gamma_{4}$)]\quad $\forall x,y\in Q$, $P_{x}P_{y}P_{x}=P_{yP_{x}}$ where $P_{x}=R_{x}L_{x^{-1}}^{-1}=L_{x}L_{x^{-1}}^{-1}$.
\end{itemize}
\end{definition}

We recall some definitions and notation, which is standard in most group theory books.  We define $[x_{0},x_{1},\ldots,x_{n}]=[[[x_{0},x_{1}],\ldots],x_{n}]$.  Hence, $[x,y,z]=[[x,y],z]$.  The following identities are well-known:
\begin{lemma}
Let $x,y,z\in G$ for a group $G$.  
\begin{itemize}
\item  $[xy,z]=[x,z]^{y}[y,z]=[x,z][x,z,y][y,z]$
\item  $[x,yz]=[x,z][x,y]^{z}=[x,z][x,y][x,y,z]$
\item  $[x,y^{-1}]=[y,x]^{y^{-1}}$ and $[x^{-1},y]=[y,x]^{x^{-1}}$
\item  $[x,y^{-1},z]^{y}[y,z^{-1},x]^{z}[z,x^{-1},y]^{x}=[x,y,z^{x}][z,x,y^{z}][y,z,x^{y}]=1$
\end{itemize}
\end{lemma}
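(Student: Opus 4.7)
The plan is to verify each identity by direct calculation from the definitions $[a,b] = a^{-1}b^{-1}ab$ and $a^b = b^{-1}ab$. All four items are classical group-theoretic identities, so the work is bookkeeping rather than insight.

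For the first bullet, I would expand the right-hand side as
\[ [x,z]^y[y,z] = y^{-1}(x^{-1}z^{-1}xz)y \cdot y^{-1}z^{-1}yz = y^{-1}x^{-1}z^{-1}xyz = [xy,z], \]
the middle $y y^{-1}$ cancelling. The second equality in that bullet follows from the one-line rule $a^b = a[a,b]$, applied with $a = [x,z]$ and $b = y$. The second bullet is handled symmetrically: expand $[x,yz]$ as $x^{-1}z^{-1}y^{-1}xyz$ and check it matches $[x,z]\cdot z^{-1}(x^{-1}y^{-1}xy)z$, then apply $a^b = a[a,b]$ again.

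For the third bullet, I would compute both sides explicitly: $[x,y^{-1}] = x^{-1}yxy^{-1}$ and $[y,x]^{y^{-1}} = y(y^{-1}x^{-1}yx)y^{-1} = x^{-1}yxy^{-1}$, which agree; the assertion for $[x^{-1},y]$ is handled the same way.

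The only item requiring real care is the fourth, the Hall--Witt identity, and this is where I expect the main obstacle to lie. My approach is to first reduce each conjugated triple commutator to a cleaner form using bullet three together with the standard conjugation rule $[a,b]^c = [a^c, b^c]$. For instance, one finds
\[ [x, y^{-1}, z]^y \;=\; \bigl[[y,x]^{y^{-1}},\, z\bigr]^y \;=\; \bigl[[y,x],\, z^y\bigr], \]
with two analogous computations for the cyclic permutations $(y,z,x)$ and $(z,x,y)$; the same rule rewrites each factor on the right of the stated equality into the form $[[*,*], *^*]$, showing that the two given expressions are cyclic-permutation rewrites of one another. It then remains to verify that the resulting cyclic product equals $1$. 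The classical way to close this is to expand each triple commutator as a twelve-letter word in $x^{\pm 1}, y^{\pm 1}, z^{\pm 1}$ and check that the thirty-six letters cancel telescopically; the obstacle is purely tracking signs and cyclic positions, and no additional idea is needed.
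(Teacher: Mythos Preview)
The paper gives no proof of this lemma at all; it is introduced with ``The following identities are well-known'' and left unproved. Your direct verifications are correct, including the Hall--Witt reduction $[x,y^{-1},z]^{y}=[[y,x],z^{y}]$ via the third bullet and the rule $[a,b]^{c}=[a^{c},b^{c}]$; the two displayed cyclic products in the fourth item then differ only by the relabeling $x\leftrightarrow y$, so a single telescoping expansion suffices, exactly as you outline.
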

Recall that the \emph{lower central series} of a group is $G=\gamma_{1}(G)\geq \gamma_{2}(G) \geq \ldots$, with $\gamma_{i}(G)$ defined inductively by 
\begin{equation*}
\gamma_{1}(G)=G \qquad \gamma_{i+1}(G)=[\gamma_{i}(G),G] 
\end{equation*}  
and the \emph{upper central series} of a group $G$ is $1=\zeta^{0}(G)\leq \zeta^{1}(G) \leq \ldots$, with $\zeta^{i}(G)$ defined inductively by
\begin{equation*}
\zeta^{0}(G)=1 \qquad \zeta^{i+1}(G)/\zeta^{i}(G)=Z(G/\zeta^{i}(G))
\end{equation*} 
where if $\pi_{i}:G\rightarrow \zeta^{i}(G)$ is the natural projection map, then $\zeta^{i+1}(G)$ is the inverse image of the center.  

Finally, a group $G$ is \emph{nilpotent} if its upper central series has finite length $\Leftrightarrow$ its lower central series has finite length.  Therefore, we have $G$ is \emph{nilpotency of class n} $\Leftrightarrow [x_{0},x_{1}\ldots,x_{n}]=1 \ \forall x_{i}\in G$.  A group $G$ is $2$-\emph{Engel} if $[x,y,y] = 1$, alternatively $xx^y = x^yx$, for all $x,y\in G$.  Lastly recall the \emph{derived subgroup} of $G$, $G' =\langle[x,y] | x,y\in G\rangle$.  A group is \emph{metabelian} if $G''=1$ (or $[x,y][u,v]=[u,v][x,y]$ for all $x,y,u,v\in G$).

\begin{theorem}(\cite{baer})
Let $G$ be a uniquely $2-$divisible group.  For all $x,y\in G$, define $x\circ y=xy[y,x]^{\f}$.  Then $(G,\circ)$ is an abelian group if and only if $G$ is has nilpotency class 2.  Moreover, powers in $G$ coincide with powers in $(G,\circ)$.
\end{theorem}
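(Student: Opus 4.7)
The plan splits the theorem into (i) general structural facts about $(G,\circ)$, (ii) the forward direction via centrality of square roots, and (iii) the converse via an associator expansion.

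First I would establish the properties of $(G,\circ)$ that hold with no class assumption. Commutativity: from $yx = xy[y,x]$ and $[x,y] = [y,x]^{-1}$ one gets $y \circ x = yx[x,y]^{1/2} = xy[y,x][y,x]^{-1/2} = xy[y,x]^{1/2} = x \circ y$, where the exponent arithmetic is legitimate because it takes place inside the abelian subgroup generated by $[y,x]$ and its square root. The element $1$ is the identity, and $x^{-1}$ is a two-sided $\circ$-inverse of $x$ since $x \circ x^{-1} = 1 \cdot [x^{-1},x]^{1/2} = 1$. A straightforward induction gives $x^{\circ(n+1)} = x^{\circ n} \circ x = x^n \cdot x \cdot [x,x^n]^{1/2} = x^{n+1}$, matching $\circ$-powers and group powers.

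For the forward direction, assume $G$ has nilpotency class at most $2$. The key auxiliary fact is that in a uniquely $2$-divisible group any square root of a central element is central: if $c \in Z(G)$ and $d^2 = c$, then $(g^{-1}dg)^2 = g^{-1}cg = c$, so uniqueness of square roots forces $g^{-1}dg = d$. Hence every $[y,x]^{1/2}$ lies in $Z(G)$. Combined with the class-$2$ identity $[z,xy] = [z,x][z,y]$, a direct expansion shows that both $(x \circ y) \circ z$ and $x \circ (y \circ z)$ reduce to $xyz \cdot [z,x]^{1/2}[z,y]^{1/2}[y,x]^{1/2}$, giving associativity and, together with the first paragraph, the abelian group structure.

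The converse is the main obstacle. Assume $(G,\circ)$ is associative and aim to show $[[y,x],z] = 1$ for all $x,y,z \in G$. I would expand the associator $A(x,y,z) = ((x \circ y) \circ z) \cdot \bigl(x \circ (y \circ z)\bigr)^{-1}$ in $G$ using the commutator identities of the preceding lemma, especially $[z,xy] = [z,y][z,x]^y$, together with the estimate $[z, b^{1/2}] \equiv [z, b]^{1/2}$ modulo higher commutators, itself derived from $[z,b] = [z,b^{1/2}] \cdot [z,b^{1/2}]^{b^{1/2}}$. By the forward direction applied to $G/\gamma_{3}(G)$, the class-$2$ part of $A$ vanishes, so $A$ lies in $\gamma_{3}(G)$; a careful analysis of the class-$3$ contribution, organized by the Hall--Witt identity, should identify $A$ modulo $\gamma_{4}(G)$ with a nonzero rational power of $[y,x,z]$. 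Setting $A = 1$ drives $[y,x,z]$ into $\gamma_{4}(G)$, and iterating the same argument up the lower central series (or invoking it inside the free uniquely $2$-divisible group on three generators) yields $[y,x,z] = 1$ as a universal identity. The principal difficulty is the bookkeeping of the non-central element $[y,x]^{1/2}$, whose conjugation by $z$ introduces $\gamma_{3}(G)$-level corrections that must be tracked carefully, and verifying that the coefficient of $[y,x,z]$ in the final relation is indeed invertible rather than vanishing.
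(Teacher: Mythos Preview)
Your forward direction and the power-coincidence argument are the standard ones and are fine. The substantive comparison is in the converse, where the paper's route is quite different from yours.

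The paper does not reproduce Baer's original argument here; instead it supplies its own alternative proof of the converse later (Corollary~\ref{newbaer}). That proof runs as follows: an abelian group is in particular a commutative Moufang loop, so by Theorem~\ref{moufang} the group $G$ is $2$-Engel; a $2$-Engel group has nilpotency class at most $3$; then Theorem~\ref{nil2} gives $Z(G,\circ)=\zeta^{2}(G)$, and since $(G,\circ)$ is abelian this forces $G=\zeta^{2}(G)$, i.e.\ class at most $2$. The point is that the $2$-Engel step, extracted from the inverse property $x^{-1}\circ(x\circ y)=y$, provides an \emph{a priori} nilpotency bound before any heavier commutator calculus is attempted.

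Your direct associator expansion is a different and more hands-on idea, but as written it has a genuine gap. Granting that you can show $A(x,y,z)\equiv [y,x,z]^{c}\pmod{\gamma_{4}(G)}$ with $c$ invertible (a computation you yourself flag as unverified), setting $A=1$ only yields $\gamma_{3}(G)\subseteq\gamma_{4}(G)$. Your ``iteration up the lower central series'' then gives $\gamma_{3}(G)\subseteq\bigcap_{n}\gamma_{n}(G)$, and this intersection need not be trivial: nothing in the hypotheses makes $G$ residually nilpotent. The free-object variant has the same problem unless you separately prove that the free uniquely $2$-divisible group in the relevant variety is residually nilpotent, which is not obvious. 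The paper's approach avoids this entirely because the $2$-Engel conclusion caps the class at $3$ from the outset. If you want to salvage your line, the cleanest repair is to first derive the $2$-Engel identity directly from associativity (exactly as in the paper's proof of Theorem~\ref{moufang}), and only then carry out your commutator bookkeeping inside a group already known to have class at most $3$, where no iteration is needed.
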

Note that in the proof of the above theorem the restriction to class 2 only appears in the proof of associativity.  An immediate question is what properties does $(G,\circ)$ have without the restriction that $G$ be nilpotent of class 2?

\begin{theorem}(\cite{greer14})
Let $G$ be a uniquely $2$-divisible group.  Then $(G,\circ)$ is a $\Gamma$-loop.  Moreover, powers coincide in $G$ and $(G,\circ)$.
\label{newloop}
\end{theorem}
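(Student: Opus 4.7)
The plan is to verify the claim by directly checking each loop axiom and each $\Gamma$-loop axiom by manipulating commutator identities in $G$ and exploiting the bijectivity of $g \mapsto g^{1/2}$.

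First, I would establish that $(G, \circ)$ is a commutative loop with neutral element $1$. Commutativity follows from the group identity $xy = yx[y,x]^{-1}$, which converts $x \circ y = xy[y,x]^{1/2}$ into $yx[y,x]^{-1/2}$, matching $y \circ x = yx[x,y]^{1/2}$. The element $1$ is neutral since $[1,x] = [x,1] = 1$. For the quasigroup property, by commutativity it suffices to show that each $L_a$ is bijective on $G$; this I would do by rewriting the equation $a \circ x = b$ as $x[x,a]^{1/2} = a^{-1}b$ and solving uniquely using the bijection $g \mapsto g^{1/2}$. Power coincidence follows by induction: $x \circ x = x^2[x,x]^{1/2} = x^2$, and assuming $x^{\circ n} = x^n$, the relation $[x, x^n] = 1$ yields $x^{\circ(n+1)} = x \circ x^n = x^{n+1}$.

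Next, for the $\Gamma$-loop axioms: ($\Gamma_1$) is already established. For ($\Gamma_2$), the loop-inverse and group-inverse coincide (since $x \circ x^{-1} = xx^{-1}[x^{-1},x]^{1/2} = 1$), so AIP reduces to the identity $(xy[y,x]^{1/2})^{-1} = x^{-1}y^{-1}[y^{-1},x^{-1}]^{1/2}$; this is verified by the conjugation identity $[y^{-1}, x^{-1}] = [y,x]^{y^{-1}x^{-1}}$, which is readily derived from the commutator identities in the preliminary lemma, together with the fact that $g \mapsto g^{1/2}$ commutes with conjugation. For ($\Gamma_3$), I would expand both $x \circ (x^{-1} \circ z)$ and $x^{-1} \circ (x \circ z)$ and observe that, since $x$ and $x^{-1}$ commute, the intermediate commutators simplify and the two expressions agree.

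The main obstacle is ($\Gamma_4$), the identity $P_xP_yP_x = P_{yP_x}$, a cubic identity in three variables with several nested square roots of commutators. My strategy would be to first obtain a usable closed form for $yP_x$ by characterizing it as the unique $z \in G$ satisfying $x^{-1} \circ z = y \circ x$, solvable via the square-root map, and then to expand both sides of the axiom and reduce systematically using the Hall--Witt identity and the other formulas of the preliminary lemma. This is where the bookkeeping is heaviest, and I would expect this step to consume the bulk of the proof, with the other axioms falling out relatively quickly once the closed form for $P_x$ is in hand.
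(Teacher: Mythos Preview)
The paper does not give its own proof of this theorem; it is quoted from \cite{greer14} as a known result, so there is nothing in the present paper to compare your argument against directly. Your plan of a head-on verification of the loop axioms and $(\Gamma_1)$--$(\Gamma_4)$ is a legitimate strategy, and your treatments of commutativity, the neutral element, power coincidence, and AIP are fine.

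There is, however, a genuine gap in your argument for the quasigroup property. You reduce $a\circ x=b$ to $x[x,a]^{1/2}=a^{-1}b$ and then claim this can be ``solved uniquely using the bijection $g\mapsto g^{1/2}$.'' That step does not follow: the map $x\mapsto x[x,a]^{1/2}$ is not a square-root map, nor is it the composite of the squaring bijection with anything obviously invertible. In a general uniquely $2$-divisible group the element $x$ need not commute with $[x,a]^{1/2}$, so you cannot collapse $x[x,a]^{1/2}$ into a single square root, and neither injectivity nor surjectivity of $L_a$ is established by what you wrote. You will need either an explicit formula for $L_a^{-1}$ in terms of group operations and square roots, or an indirect argument (for instance, identifying $L_a^{\circ}$ with a product of bijections coming from the associated Bruck-loop structure on the left section, which is closer to the route taken in \cite{greer14}).

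A smaller issue: your sketch of $(\Gamma_3)$ appeals only to the fact that $x$ and $x^{-1}$ commute, but expanding $x\circ(x^{-1}\circ z)$ and $x^{-1}\circ(x\circ z)$ produces commutators such as $[x^{-1}z[z,x^{-1}]^{1/2},\,x]$, and equality of the two sides is not immediate from $[x,x^{-1}]=1$ alone; some genuine commutator manipulation is still required there.
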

The main goal of \cite{greer14} was to establish a connection to Bruck loops and $\Gamma$-loops of odd order.
\begin{theorem}(\cite{greer14})
There is a one-to-one correspondence between left Bruck loops of odd order n and $\Gamma$-loops of odd order $n$.  That is

\begin{enumerate}
\item[(i)] If $(Q,\cdot)$ is a left Bruck loop of odd order $n$ with $1\in Q$ identity element, then $(Q,\circ)$ is a $\Gamma$-loop of order $n$ where $x\circ y = (1)L_xL_y[L_y,L_x]^{1/2}$.
\item[(ii)] If $(Q,\cdot)$ is a $\Gamma$-loop of odd order $n$, then $(Q,\oplus)$ is a left Bruck loop of order $n$ where $x\oplus y = (x^{-1}\backslash (y^2x))^{1/2}$.
\item[(iii)] The mappings in $(i)$ and $(ii)$ are mutual inverses.
\end{enumerate}
\end{theorem}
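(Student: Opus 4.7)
The strategy is to reduce each direction of the correspondence to the Baer construction of Theorem \ref{newloop}, applied to a uniquely $2$-divisible group attached to the given loop. For Part (i), given a left Bruck loop $(Q,\cdot)$ of odd order $n$, I would invoke Glauberman's theorem on Bruck loops of odd order, which guarantees that $G:=\Mlt_\lambda(Q)$ has odd order and hence is uniquely $2$-divisible. By Theorem \ref{newloop}, the Baer operation $\alpha\star\beta = \alpha\beta[\beta,\alpha]^{1/2}$ endows $G$ with the structure of a $\Gamma$-loop, and the prescribed formula $x\circ y = (1)(L_x\star L_y)$ is precisely the evaluation at $1$ of this operation on left translations. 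To finish, one shows that $x\mapsto L_x$ identifies $Q$ with a sub-$\Gamma$-loop of $(G,\star)$, so that the $\Gamma$-loop axioms transfer from $G$ down to $Q$. The critical point is closure of the image $L_Q$ under $\star$, which uses the Bruck identity to re-express commutators of translations through $L_x$'s and inner mappings.

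For Part (ii), define $\oplus$ on a $\Gamma$-loop $(Q,\cdot)$ of odd order by the given formula. Power-associativity ensures that the unique square root exists, so $\oplus$ is well-defined. A quick calculation gives $1\oplus x = (x^2)^{1/2} = x$ and $x\oplus 1 = (x^{-1}\backslash x)^{1/2} = (x^2)^{1/2} = x$, using $x^{-1}\cdot x^2 = x$ from power-associativity. Unique solvability of $x\oplus y = z$ reduces to bijectivity of $P_x$ and of squaring, while the automorphic inverse property $(x\oplus y)^{-1} = x^{-1}\oplus y^{-1}$ follows from $\Gamma_2$. The heart of the argument is the left Bol identity $x\oplus(y\oplus(x\oplus z)) = (x\oplus(y\oplus x))\oplus z$: rewriting $\oplus$ through the $P$-maps in the form $(x\oplus y)^2 = y^2 P_x$, the left Bol identity — after squaring both sides and clearing the square roots — should unfold into exactly the $\Gamma_4$ axiom $P_x P_y P_x = P_{yP_x}$.

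Part (iii) is by direct computation: starting from a Bruck loop $(Q,\cdot)$ and successively applying (i) then (ii), I would unwind $x\oplus y = (x^{-1}\backslash_\circ (y\circ y\circ x))^{1/2}$ via $L_x\star L_y$ and the Bruck identities until it collapses to $x\cdot y$; the opposite composition is handled symmetrically using the same $P$-map dictionary. The hardest step throughout is the left Bol identity in Part (ii) — the $\Gamma_4$ axiom is tailor-made for it, but converting nested $\oplus$-expressions into a single $P$-map expression requires careful bookkeeping of squares, square roots, and which layer of operation each symbol lives in. A close second is the closure of $L_Q$ under $\star$ in Part (i), where the left Bol and AIP laws for $(Q,\cdot)$ enter essentially; once these two technical points are dispatched, Part (iii) is largely formal.
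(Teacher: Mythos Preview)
The paper does not contain a proof of this theorem: it is quoted verbatim from \cite{greer14} as background, with no accompanying argument. There is therefore nothing in the present paper to compare your proposal against.

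For what it is worth, your outline is broadly in the spirit of the original source. The reduction of Part (ii) to the $\Gamma_4$ axiom via the dictionary $(x\oplus y)^2 = y^2 P_x$ is exactly the right move, and your identification of the Bol identity as the crux is accurate. In Part (i), however, your formulation of the key step is slightly off: you want $L_x\star L_y = L_{x\circ y}$ as an identity in $\Mlt_\lambda(Q)$, not merely closure of the set $L_Q$ under $\star$; it is this equality that lets you pull the $\Gamma$-axioms back from $(G,\star)$ to $(Q,\circ)$ along the bijection $x\mapsto L_x$. Establishing that equality uses that $L_Q$ is a \emph{twisted subgroup} (equivalently, a transversal closed under $a\mapsto a^{-1}$ and $(a,b)\mapsto aba$) of $\Mlt_\lambda(Q)$, which is where the left Bol identity for $(Q,\cdot)$ actually enters. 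With that correction, your plan matches the approach of \cite{greer14}.
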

In general, not much can be said about $(G,\circ)$ without any restrictions on $G$.  However, we do have the following.
\begin{lemma}
Let $G$ be a uniquely $2-$divisible group.  Then $Z(G)\leq Z(G,\circ)$.  
\end{lemma}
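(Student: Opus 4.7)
The plan is to exploit the simple fact that when $z \in Z(G)$, every commutator of the form $[z,\cdot]$ or $[\cdot,z]$ (and hence its square root) is trivial, which collapses the twisting factor in $\circ$. Since $(G,\circ)$ is commutative by Theorem~\ref{newloop}, membership in $Z(G,\circ)$ reduces to showing that $z$ lies in the nucleus of $(G,\circ)$, i.e.\ that it associates with arbitrary pairs under $\circ$. Commutativity of $\circ$ further reduces the three nuclear identities to the single condition $(x\circ y)\circ z = x\circ(y\circ z)$ for all $x,y\in G$.

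First I would record the immediate consequence $x\circ z = z\circ x = xz$ for every $x\in G$: indeed $[z,x]=1$ gives $[z,x]^{1/2}=1$, so the definition of $\circ$ collapses to ordinary multiplication, and $xz = zx$ because $z$ is central in $G$. Next I would compute the two sides of the associative law separately. On the right, $y\circ z = yz$, and then
\begin{align*}
x\circ(y\circ z) &= x\circ(yz) = x(yz)\,[yz,x]^{1/2}.
\end{align*}
Using the standard commutator identity $[yz,x]=[y,x]^{z}[z,x]$ together with centrality of $z$ (so $[z,x]=1$ and conjugation by $z$ is trivial), this simplifies to $xyz\,[y,x]^{1/2}$. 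On the left,
\begin{align*}
(x\circ y)\circ z &= \bigl(xy\,[y,x]^{1/2}\bigr)\circ z = xy\,[y,x]^{1/2}\,z
\end{align*}
since $z$ is central and hence the square-root factor $[z,\,xy[y,x]^{1/2}]^{1/2}$ is trivial. Because $z$ commutes with every element of $G$, the right-hand side equals $xyz\,[y,x]^{1/2}$, matching the other side.

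The routine but necessary final step is to verify that $z\in Z(G,\circ)$ in the full loop-theoretic sense (commuting with every element under $\circ$ and associating in all three positions), but commutativity of $\circ$ collapses these to the one identity just checked. I do not expect any real obstacle here: the whole argument is a bookkeeping exercise built on the fact that $Z(G)$ kills every commutator that appears in the definition of $\circ$ and in the standard commutator expansions. The only place where one must be slightly careful is recognizing that the inner square root $[y,x]^{1/2}$ on the left-hand side slides past $z$ freely, which is immediate because $z$ is central in $G$ and powers in $G$ coincide with powers in $(G,\circ)$ by Theorem~\ref{newloop}.
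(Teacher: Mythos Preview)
Your proof is correct and follows essentially the same approach as the paper: both verify that a central element $z$ of $G$ lies in the nucleus of $(G,\circ)$ by direct computation, using that all commutators involving $z$ vanish. The only difference is that the paper writes out all three nuclear identities explicitly, whereas you invoke commutativity of $(G,\circ)$ to reduce to a single check; your one computation matches the paper's third computation essentially verbatim.
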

\begin{proof}
Let $g\in Z(G)$.  Then we have

\[
g\circ (x \circ y)= gxy[y,x]^{\f}[xy[y,x]^{\f},g]^{\f}
=gxy[y,x]^{\f}
=gxy[y,gx]^{\f}
=(g\circ x)\circ y,
\]
\[
x\circ(g\circ y) =xgy[gy,x]^{\f}
=xgy[y,x]^{\f}
=xgy[y,xg]^{\f}
=(x\circ g)\circ y,
\]
\[
x\circ (y\circ g) =xyg[yg,x]^{\f}
=xyg[y,x]^{\f}
=xy[y,x]^{\f}g
=xy[y,x]^{\f}g[g,xy[y,x]^{\f}]^{\f}
=(x\circ y)\circ g.
\]
Thus $g\in Z(G,\circ)$.
\end{proof}

It turns out that $(G,\circ)$ has a lot of structure if $G$ is 2-Engel.
\begin{lemma}
Let $G$ be uniquely $2-$divisible.  Then $xy[y,x]^{1/2}=(xy^{2}x)^{1/2}$ if and only if $G$ is $2-$Engel.
\end{lemma}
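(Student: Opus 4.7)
The plan is to treat the two directions separately and use the unique square root to pass between the identity $xy[y,x]^{1/2}=(xy^2x)^{1/2}$ and its squared form $(xy[y,x]^{1/2})^2=xy^2x$.

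For the forward direction, I would first observe that in a $2$-Engel group $[y,x]$ commutes with both $x$ and $y$: the hypothesis $[a,b,b]=1$ says $[y,x]$ commutes with $x$ (taking $a=y$, $b=x$), and applying the identity $[y,x]=[x,y]^{-1}$ turns $[x,y,y]=1$ into $[y,x,y]=1$, so $[y,x]$ also commutes with $y$. Since $[y,x]^{1/2}$ is a power of $[y,x]$ (unique in a uniquely $2$-divisible group), it commutes with $x$ and $y$ as well. Then
\[
(xy[y,x]^{1/2})^2 = xyxy\cdot [y,x] = xy\cdot (xy[y,x]) = xy\cdot yx = xy^2x,
\]
and unique $2$-divisibility lets me take square roots to conclude $xy[y,x]^{1/2}=(xy^2x)^{1/2}$.

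For the converse, assume the identity and square to get $(xy[y,x]^{1/2})^2=xy^2x$. Setting $c=[y,x]^{1/2}$ and cancelling an $xy$ on the left, this reduces to $cxyc = yx$, hence $cxy = yxc^{-1}$. Substituting $yx = xy\cdot [y,x] = xy\cdot c^2$ on the right gives $cxy = xyc$, so $c$ (and therefore $[y,x]=c^2$) commutes with $xy$. In commutator language, $[xy,[y,x]] = 1$ for all $x,y\in G$.

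The last step is to upgrade $[xy,[y,x]]=1$ to the $2$-Engel identity. Since this holds for all $x,y$, I replace $y$ by $x^{-1}y$; a direct commutator calculation gives $[x^{-1}y,x] = [y,x]$, so the substituted identity reads $[y,[y,x]] = 1$, equivalently $[y,x,y]=1$. Using $[y,x]=[x,y]^{-1}$ this is the same as $[x,y,y]=1$, i.e.\ $G$ is $2$-Engel. The only step that requires any care is the manipulation of $cxyc=yx$ together with the substitution trick at the end; everything else is routine commutator calculus and the bijectivity of squaring.
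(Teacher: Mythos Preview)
Your argument is correct and follows essentially the same route as the paper's: both directions proceed by squaring, both obtain that $[y,x]^{1/2}$ commutes with $xy$ (equivalently $yx$), and both finish the converse with the same substitution trick (the paper replaces $x$ by $y^{-1}x$, you replace $y$ by $x^{-1}y$, yielding $[y,x,x]=1$ versus $[y,x,y]=1$). One small imprecision: in a general uniquely $2$-divisible group, $[y,x]^{1/2}$ is not literally an integer power of $[y,x]$, so ``it is a power of $[y,x]$'' is not the right justification; the correct argument (which the paper gives) is that if $a$ commutes with $b$ then $(a^{-1}b^{1/2}a)^2=a^{-1}ba=b$, whence $a^{-1}b^{1/2}a=b^{1/2}$ by uniqueness of square roots.
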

\begin{proof} Before beginning the proof, we first note that if $G$ is uniquely $2-$divisible and $a, b \in G$ commute, then $a$ commutes with $b^{1/2}$. Indeed, since $(a^{-1} b^{1/2} a)^2 = a^{-1} b a$, it follows that $(a^{-1} b a)^{1/2} = a^{-1} b^{1/2} a$. Thus, since $a$ and $b$ commute, we have that $b^{1/2} = a^{-1} b^{1/2} a$, as desired.\\
Suppose $G$ is $2-Engel$. Hence, both $x$ and $y$ commute with $[y,x]$.  Then by the note above, \[(xy[y,x]^{1/2})^{2}=xy[y,x]^{1/2}xy[y,x]^{1/2}=(xy)^{2}[y,x]=xy^{2}x.\]
Taking square roots of both sides gives the desired results.\\
For the reverse direction, set $u=[y,x]^{1/2}$.  By hypothesis, $xyuxyu=xyyx$ and cancelling gives $uxyu=yx$.  Multiplying both sides on the right by $u$ gives $yxu=uxyu^2=uxyy^{-1}x^{-1}yx = uyx$.  Since $yx$ commutes with $u$ (Theorem \ref{newloop}) it commutes with any power of $u$.  Thus $yx[y,x]=[y,x]yx$.  Replacing $x$ with $y^{-1}x$ to get $x[y,y^{-1}x]=[y,y^{-1}x]x$.  But $[y,y^{-1}x]=y^{-1}x^{-1}yyy^{-1}x=[y,x]$.  Therefore $x[y,x]=[y,x]x$, that is, $[y,x,x]=1$.  Thus, $G$ is $2-$Engel.
\end{proof}

Defining multiplication with $x\oplus y=(xy^{2}x)^{\f}$ has been well studied by Bruck, Glaubermann, and others.

\begin{theorem}(\cite{glauberman64})
Let $G$ be uniquely $2-$divisible group.  For all $x,y\in G$, define $x\oplus y=(xy^2x)^{\f}$.  Then $(G,\oplus)$ is a Bruck loop.  Moreover, powers in $G$ coincide with powers in $(G,\circ)$.
\end{theorem}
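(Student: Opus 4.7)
The plan is to verify, in turn, the loop axioms, the automorphic inverse property, and the left Bol identity for $(G,\oplus)$. First, $1$ is a two-sided identity since $1\oplus x = (x^{2})^{\f} = x$ and symmetrically $x\oplus 1 = x$. For the left quasigroup axiom, $a\oplus y = b$ rewrites as $ay^{2}a = b^{2}$, which has the unique solution $y = (a^{-1}b^{2}a^{-1})^{\f}$ by bijectivity of squaring. For the right quasigroup axiom, $y\oplus a = b$ rewrites as $ya^{2}y = b^{2}$; the substitution $y = a^{-1}t$ turns this into $tat = ab^{2}$, and $t = (ab^{2}a)^{\f}a^{-1}$ is easily seen to solve it, giving the explicit solution $y = a^{-1}(ab^{2}a)^{\f}a^{-1}$. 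Uniqueness is where unique $2$-divisibility enters essentially: if $y$ and $yu$ both satisfy $ya^{2}y = b^{2}$, then with $v = a^{2}y$ one gets $uvu = v$, hence $(uv)^{2} = uvuv = v\cdot v = v^{2}$, and uniqueness of square roots forces $uv = v$, i.e.\ $u = 1$.

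Next, the automorphic inverse property is immediate from $(g^{\f})^{-1} = (g^{-1})^{\f}$ applied to $g = xy^{2}x$:
\[
(x\oplus y)^{-1} = \bigl((xy^{2}x)^{\f}\bigr)^{-1} = (x^{-1}y^{-2}x^{-1})^{\f} = x^{-1}\oplus y^{-1}.
\]

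The left Bol identity $x\oplus(y\oplus(x\oplus z)) = (x\oplus(y\oplus x))\oplus z$ is then the main computation, but it collapses once one notices that $(xyx)^{2} = xyx^{2}yx = x(yx^{2}y)x$, so by uniqueness of square roots
\[
x\oplus(y\oplus x) = (x\cdot yx^{2}y\cdot x)^{\f} = xyx.
\]
Therefore the right-hand side equals $(xyx)\oplus z = (xyx\cdot z^{2}\cdot xyx)^{\f}$. On the left-hand side, $(y\oplus(x\oplus z))^{2} = y\cdot xz^{2}x\cdot y$, so
\[
x\oplus(y\oplus(x\oplus z)) = (x\cdot yxz^{2}xy\cdot x)^{\f} = (xyx\cdot z^{2}\cdot xyx)^{\f},
\]
matching the right-hand side. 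Coincidence of powers follows from the short induction $x\oplus x^{n} = (x\cdot x^{2n}\cdot x)^{\f} = x^{n+1}$, with negative exponents handled via AIP.

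The main obstacle I anticipate is the bookkeeping for the right quasigroup property---both producing the explicit formula for $R_{a}^{-1}$ and running the unique $2$-divisibility argument for uniqueness; the Bol identity itself, by contrast, practically writes itself once one spots that $xyx$ is the square root of $xyx^{2}yx$.
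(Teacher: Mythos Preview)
Your proof is correct. Each step checks out: the quasigroup axioms (including the slightly delicate uniqueness argument for the right division, which you handle cleanly via $(uv)^2=v^2\Rightarrow uv=v$), the AIP, the left Bol identity (reduced to the key observation $x\oplus(y\oplus x)=xyx$), and the coincidence of powers.

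There is nothing to compare against, however: the paper does not supply its own proof of this theorem. It is stated with a citation to Glauberman and used as background input for the subsequent results (in particular for Theorem~\ref{moufang}). Your direct verification is the standard argument and would serve perfectly well as a self-contained proof, but the authors simply defer to the literature here.
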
 
Finally, it is well known that commutative Bruck loops are Moufang \cite{bruck71}.
\begin{theorem}
Let $G$ be uniquely $2-$divisible.  Then $G$ is $2-$Engel if and only if $(G,\circ)$ is a commutative Moufang loop.
\label{moufang}
\end{theorem}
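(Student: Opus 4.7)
The plan is to prove the two directions separately, using the preceding Lemma as the bridge between $(G,\circ)$ and Glauberman's Bruck loop operation $x\oplus y=(xy^2x)^{1/2}$.

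For the backward direction, suppose $G$ is $2$-Engel. The preceding Lemma states that then $xy[y,x]^{1/2}=(xy^2x)^{1/2}$, so the operation $\circ$ coincides with $\oplus$. By Glauberman's theorem cited above, $(G,\oplus)$ is a Bruck loop, and since $(G,\circ)$ is commutative by Theorem~\ref{newloop} (property $\Gamma_1$), $(G,\circ)=(G,\oplus)$ is a commutative Bruck loop. Every commutative Bruck loop is Moufang, so this direction is complete.

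For the forward direction, assume $(G,\circ)$ is commutative Moufang. By Moufang's theorem any two elements generate a subgroup, and commutativity then forces $\langle x,y\rangle_\circ$ to be an abelian subgroup for every $x,y\in G$. In particular $y\circ(x\circ y^{-1})=x$ holds in $(G,\circ)$, where $y^{-1}$ is the $\circ$-inverse (which equals the group inverse by power-coincidence). The idea is to unpack this identity via $a\circ b=ab[b,a]^{1/2}$ to extract the $2$-Engel condition. Setting $c:=[y,x]^{1/2}$, a short calculation using $[y^{-1},x]=[x,y]^{y^{-1}}$ yields $x\circ y^{-1}=xc^{-1}y^{-1}$. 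Writing $d:=xc^{-1}y^{-1}$, the equation $y\circ d=x$ rearranges to $[d,y]^{1/2}=d^{-1}y^{-1}x=yc(y^x)^{-1}$, while expanding $[d,y]$ directly gives $[d,y]=yc(y^x)^{-1}c^{-1}$.

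Squaring the former and comparing with the latter forces $yc(y^x)^{-1}=c^{-1}$, i.e., $cyc=y^x$. Since $y^x=y[y,x]=yc^2$, this reduces to $yc=c^{-1}yc^2$ and then to $cy=yc$, so $c$ commutes with $y$. Unique $2$-divisibility promotes this to $[y,x]=c^2$ commuting with $y$, hence $[[y,x],y]=1$; since $[x,y]=[y,x]^{-1}$ and the centralizer of $y$ is closed under inverses, this is equivalent to $[[x,y],y]=1$, the $2$-Engel condition. The main obstacle is the bookkeeping with conjugates and ``half-commutators'' while expanding $y\circ(x\circ y^{-1})$; this is kept tractable by the commutator identities recalled earlier in the paper together with the fact that $(g^h)^{1/2}=(g^{1/2})^h$ in uniquely $2$-divisible groups.
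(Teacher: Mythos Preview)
Your proof is correct and follows essentially the same approach as the paper. For the $2$-Engel $\Rightarrow$ Moufang direction you match the paper exactly. For the converse, the paper expands the inverse-property identity $x^{-1}\circ(x\circ y)=y$ (with $u=[x,y]^{1/2}$) and deduces that $x$ commutes with $[x,y]$, whereas you expand the commutativity-plus-IP identity $y\circ(x\circ y^{-1})=x$ (with $c=[y,x]^{1/2}$) and deduce that $y$ commutes with $[y,x]$; these are mirror-image computations yielding symmetric instances of the same $2$-Engel conclusion. Two small cosmetic points: your ``backward/forward'' labels are swapped relative to the statement, and once $c$ commutes with $y$ the fact that $c^2=[y,x]$ commutes with $y$ is immediate---no appeal to unique $2$-divisibility is needed there.
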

\begin{proof}
If $G$ is $2-$Engel then $(G,\circ)=(G,\oplus)$, and hence a commutative Bruck loop, so Moufang.\\  
Alternatively, set $u=[x,y]^{1/2}$.  Using the inverse property, \[y=x^{-1}\circ (x\circ y) = x^{-1}xyu^{-1}[xyu^{-1},x^{-1}]^{1/2}.\]  Cancel and multiply on the left by $u$ to get $u=[xyu^{-1},x^{-1}]^{1/2}$.  Squaring both sides gives $u^2 = [xyu^{1},x^{-1}] = [yu^{-1},x^{-1}]=uy^{-1}xyu^{-1}x^{-1}$.  Hence $u=y^{-1}xyu^{-1}xy$ after canceling.  Multiplying on the left by $x^{-1}$ to get $x^{-1}u=[x,y]u^{-1}x^{-1} = u^2u^{-1}x^{-1} = ux^{-1}$.  Since $x^{-1}$ commutes with $u$ it commutes with $u^2=[x,y]$.  Similarly, since $[x,y]$ commutes with $x^{-1}$, it commutes with $x$.  Hence, $G$ is 2-Engel.
\end{proof}

\section{Split Metabelian Groups}

Let $G$ be the semidirect product of a normal abelian subgroup $H$ of odd order acted on (as a group of automorphisms) by an abelian group $F$ of odd order. Products in $H$ and in $F$ are written multiplicatively. We use exponential notation for the action of $\mathrm{Aut}(H)$ on $H$: given $\theta \in \mathrm{Aut}(H)$, $h \in H$, define $h^\theta = \theta(h)$. 

Further, given $m, n \in \mathbb{Z}$ with $m$ and $n$ relatively prime to $|H|$, we make special use of the notation $h^{\frac{m}{n} \theta} = (h^\frac{m}{n})^\theta = (h^\theta)^{\frac{m}{n}}$. Note that since $H$ is abelian, this convention is consistent with an additional notation: given commuting automorphisms $\theta, \psi \in \mathrm{Aut}(H)$, $h^{\theta + \psi} = h^\theta h^\psi$. Then $G = H \rtimes F = HF$, where 
\[h_1f_1 h_2f_2 = h_1f_1 \cdot h_2f_2 = h_1h_2^{f_1} f_1 f_2\] 
for all $h_1, h_2 \in H, f_1, f_2 \in F$. Note that $G$ is metabelian (we refer to such groups as \emph{split metabelian}). To proceed, we need a proposition.

\begin{proposition}
\label{commutingauts}
Let $H$ be an abelian group of odd order. Suppose $\alpha$ and $\beta$ are commuting automorphisms of $H$ with odd order in $\textrm{Aut}(H)$. Then the map $h \mapsto h^{\alpha + \beta}$ is an automorphism of $H$.
\end{proposition}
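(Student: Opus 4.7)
The plan is to work inside the endomorphism ring $R=\mathrm{End}(H)$, which is a ring because $H$ is abelian. Since $H$ is abelian, the stated map $h\mapsto h^{\alpha+\beta}=h^\alpha h^\beta$ is automatically a homomorphism, so, $H$ being finite, the proposition reduces to showing that $\alpha+\beta$ is a unit of $R$.

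The first step is the factorisation
\[\alpha+\beta \;=\; \alpha\cdot(1+\alpha^{-1}\beta),\]
which is valid because $\alpha$ and $\beta$ commute. Since $\alpha\in\Aut(H)$ is already a unit of $R$, the task reduces to showing that $1+\gamma$ is a unit of $R$ for every $\gamma\in\Aut(H)$ of odd order. In our situation $\gamma=\alpha^{-1}\beta$ does have odd order, because it lies in the abelian subgroup $\langle\alpha,\beta\rangle\leq\Aut(H)$, whose exponent divides $\mathrm{lcm}(\mathrm{ord}(\alpha),\mathrm{ord}(\beta))$ — a product of two odd numbers, hence odd.

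The crux of the proof is the telescoping identity: if $\gamma$ has odd order $k$, then in $R$
\[(1+\gamma)\bigl(1-\gamma+\gamma^2-\gamma^3+\cdots+\gamma^{k-1}\bigr)\;=\;1+(-1)^{k-1}\gamma^k\;=\;1+1\;=\;2,\]
using $\gamma^k=1$ and the fact that $k-1$ is even. Because $|H|$ is odd, the doubling map $h\mapsto h^2$ is a bijection on $H$, i.e.\ $2$ is a unit of $R$. Hence $1+\gamma$ has a one-sided inverse in $R$, and since $H$ is finite this forces $1+\gamma$ to be an automorphism. Composing with $\alpha$ then yields that $\alpha+\beta$ is an automorphism, as required.

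The only delicate step is spotting the telescoping identity, and in particular recognising that it is essential for $k$ to be odd: for even $k$ the alternating sum collapses to $0$ rather than $2$ and the argument breaks down. Thus the hypothesis on the orders of $\alpha$ and $\beta$ (forcing $\gamma$ to have odd order) and the hypothesis on $|H|$ (making $2$ a unit of $R$) enter the argument in complementary but both indispensable ways.
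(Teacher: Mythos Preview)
Your proof is correct, and it takes a genuinely different route from the paper's. The paper argues injectivity directly: assuming $h_0^{\alpha}=h_0^{-\beta}$, it iterates to obtain $h_0^{\alpha^2}=h_0^{\beta^2}$, then peels off exponents two at a time using a common odd exponent $k$ for $\alpha$ and $\beta$ until it reaches $h_0^{\alpha}=h_0^{\beta}$, whence $h_0^{2}=1$ and so $h_0=1$ by oddness of $|H|$. Your argument instead works in the endomorphism ring, reduces via the factorisation $\alpha+\beta=\alpha(1+\alpha^{-1}\beta)$ to inverting $1+\gamma$ for $\gamma$ of odd order, and produces an explicit inverse $\tfrac{1}{2}(1-\gamma+\gamma^{2}-\cdots+\gamma^{k-1})$ from the telescoping identity. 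Your approach is slicker and yields an explicit inverse formula (and in fact your inverse is two-sided without appeal to finiteness, since both factors are polynomials in $\gamma$ and hence commute); the paper's approach is more hands-on but requires no ring-theoretic framing. Both use the odd-order hypotheses at exactly the same two pressure points: to make the relevant exponent odd, and to make $2$ invertible on $H$.
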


\begin{proof}
Define $\phi:H \to H$ by $\phi(h) = h^{\alpha + \beta}$. Clearly, $\phi$ is a homomorphism. We will show that $\phi$ is injective. Suppose $h_0 \in H$ such that $\phi(h_0) = 1$. It follows that $h_0^\alpha = h_0^{-\beta}$, and thus
\[h_0^{\alpha^2} = (h_0^\alpha)^\alpha = (h_0^{-\beta})^\alpha = (h_0^\alpha)^{-\beta} = (h_0^{-\beta})^{-\beta} = h_0^{\beta^2}.\]
Now, since $\alpha, \beta$ are commuting, odd-ordered automorphisms of $H$, there exists some positive, odd integer $k$ such that $\alpha^k = id_H = \beta^k$. In particular,
\begin{align*}
h_0^{\alpha^k} &= h_0^{\beta^k}\\
(h_0^{\alpha^2})^{\alpha^{k-2}} &= (h_0^{\beta^2})^{\beta^{k-2}}\\
(h_0^{\beta^2})^{\alpha^{k-2}} &= (h_0^{\beta^2})^{\beta^{k-2}}\\
(h_0^{\alpha^{k-2}})^{\beta^2} &= (h_0^{\beta^{k-2}})^{\beta^2}.\\
\end{align*}
Since $\beta^2 \in \textrm{Aut}(H)$, it follows that $h_0^{\alpha^{k-2}} = h_0^{\beta^{k-2}}$. Continuing in this manner, we have that $h_0^\alpha = h_0^\beta$, and hence $h_0^\beta = h_0^{-\beta}$. Since $|H|$ is odd, this implies that $h_0 = 1$. Therefore, $\phi$ is an injective homomorphism $H \to H$ and is thus an automorphism of $H$.
\end{proof}

Since $F$ is abelian, Proposition \ref{commutingauts} implies that if $\theta$ is a $\mathbb{Q}$-linear combination of elements of $F$ (where the numerators and denominators of the coefficients are relatively prime to $|H|$), the map $H \to H$, $h \mapsto h^\theta$ is an automorphism of $H$ which commutes with any other such linear combination $\psi$. In particular, note that the aforementioned automorphism has an inverse in $\mathrm{Aut}(H)$; we denote this inverse by $h \mapsto h^{\theta^{-1}}$, and this map also commutes with $\psi$. We will use this fact throughout the following calculations without specific reference.

\begin{lemma}
\label{circprops}
Let $u = hf, x = h_1f_1, y = h_2f_2 \in G$. Then
\begin{itemize}
\item $u^{-1} = h^{-f^{-1}} f^{-1}$
\item $u^{1/2} = h^{(1+f^{1/2})^{-1}} f^{1/2}$
\item $[x, y] = h_1^{f_1^{-1}(-1 + f_2^{-1})} h_2^{f_2^{-1}(-f_1^{-1} + 1)} \in H$
\item $x \circ y = h_1^{\frac{1+f_2}{2}} h_2^{\frac{1+f_1}{2}} f_1f_2$
\item $x \backslash y = x \backslash_\circ y =  \left( h_1^{-1 - f_1^{-1}f_2} h_2^2 \right)^{(1+f_1)^{-1}} f_1^{-1}f_2$
\item $uL_{x,y} = \left( h^{(1+f_1)(1+f_2)} h_2^{1+ff_1 - f - f_1} \right)^{\frac{(1 + f_1f_2)^{-1}}{2}} f$
\end{itemize}
\end{lemma}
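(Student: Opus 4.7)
The plan is to verify each identity by direct calculation in the semidirect product $G = H \rtimes F$, invoking Proposition~\ref{commutingauts} throughout so that $\mathbb{Q}$-linear combinations of elements of $F$ (with denominators coprime to $|H|$) may be treated as mutually commuting, invertible automorphisms of $H$. This legitimizes the formal manipulation of the exponents on $H$-elements as if they lived in a commutative ring; the rest is bookkeeping.

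The first three items are preparatory. For $u^{-1}$ one just multiplies $(hf)(h^{-f^{-1}}f^{-1})$ using the semidirect product rule and observes collapse to $1$. For $u^{1/2}$ one squares the proposed expression; the exponent $(1+f^{1/2})^{-1}$ is chosen precisely so that, after a single application of the action when squaring, the $H$-exponent telescopes to $1$. For $[x,y]$ one expands $x^{-1}y^{-1}xy$ using the product rule: the $F$-parts telescope to the identity (commutativity of $F$ is essential here), and collecting the $H$-parts yields the stated expression in $H$.

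The remaining three items build directly on these. For $x\circ y = xy\cdot [y,x]^{1/2}$, one multiplies $xy = h_1 h_2^{f_1} f_1 f_2$ by $[y,x]^{1/2}$, which is read off from the $[x,y]$ formula (using $[y,x]=[x,y]^{-1}$ and that $H$ is abelian of odd order); after moving $[y,x]^{1/2}$ past $f_1f_2$, the $H$-exponents collapse into the symmetric expression $h_1^{(1+f_2)/2}h_2^{(1+f_1)/2}$. For $x\backslash_\circ y$, matching $F$-parts in the equation $x\circ z = y$ forces the $F$-part of $z$ to be $f_1^{-1}f_2$; writing $z = kf_1^{-1}f_2$ and solving the residual $H$-equation uses the invertibility of $(1+f_1)/2$ as an automorphism of $H$, which is exactly what Proposition~\ref{commutingauts} supplies. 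Finally, by commutativity of $\circ$, $uL_{x,y} = (x\circ y)\backslash_\circ(y\circ(x\circ u))$, and iterated application of the formulas for $\circ$ and $\backslash_\circ$, followed by routine simplification of the exponents, produces the stated expression.

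The main obstacle is purely computational: the exponents appearing on $H$-elements are polynomial-like expressions in $f, f_1, f_2$, and one must track them carefully through each substitution and each time an $H$-element is moved past an $F$-element (which twists the exponent by the corresponding automorphism). Once Proposition~\ref{commutingauts} is invoked to make sense of these exponents as commuting, invertible operators, however, there is no genuine conceptual difficulty beyond the arithmetic.
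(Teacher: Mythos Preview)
Your proposal is correct and follows essentially the same route as the paper: verify $u^{-1}$ and $u^{1/2}$ by multiplying/squaring the candidate, expand $[x,y]$ directly, compute $x\circ y$ from the definition, handle $x\backslash_\circ y$ via the equation $x\circ z=y$, and obtain $uL_{x,y}$ by iterating the previous formulas. The only cosmetic difference is that for $x\backslash_\circ y$ the paper verifies the stated formula by substitution rather than deriving it, but the computations are the same.
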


\begin{proof}
First, we compute
\begin{align*}
u \cdot h^{-f^{-1}} f^{-1} &= hf \cdot h^{-f^{-1}} f^{-1}\\
&= h h^{-f^{-1}f} ff^{-1}\\
&= hh^{-1}ff^{-1}\\
&= 1,
\end{align*}
and first item is proved.

Next, we compute
\begin{align*}
\left(h^{(1+f^{1/2})^{-1}} f^{1/2}\right)^2 &= h^{(1+f^{1/2})^{-1}} f^{1/2} \cdot h^{(1+f^{1/2})^{-1}} f^{1/2}\\
&= h^{(1+f^{1/2})^{-1}} h^{(1+f^{1/2})^{-1}f^{1/2}} f^{1/2}f^{1/2}.
\end{align*}
Setting $k = h^{(1+f^{1/2})^{-1}} \in H$ gives
\begin{align*}
\left(h^{(1+f^{1/2})^{-1}} f^{1/2}\right)^2 &= kk^{f^{1/2}} f\\
&= k^{1 + f^{1/2}} f\\
&= hf\\
&= u,
\end{align*}
and thus $u^{1/2} = h^{(1+f^{1/2})^{-1}} f^{1/2}$.

Now,
\begin{align*}
[x,y] &= x^{-1}y^{-1}xy\\
&= \left(h_1^{-f_1^{-1}}f_1^{-1} \cdot h_2^{-f_2^{-1}}f_2^{-1} \right) \left(h_1 f_1 \cdot h_2 \cdot f_2 \right)\\
&= \left(h_1^{-f_1^{-1}} h_2^{-f_2^{-1}f_1^{-1}} f_1^{-1}f_2^{-1} \right) \left(h_1 h_2^{f_1} f_1 f_2 \right)\\
&= h_1^{-f_1^{-1}} h_2^{-f_2^{-1}f_1^{-1}} \left(h_1 h_2^{f_1} \right)^{f_1^{-1} f_2^{-1}} f_1^{-1}f_2^{-1}f_1 f_2\\
&= h_1^{-f_1^{-1}+(f_1f_2)^{-1}} h_2^{-(f_1f_2)^{-1} + f_2^{-1}} \cdot 1\\
&= h_1^{f_1^{-1}(-1 + f_2^{-1})} h_2^{f_2^{-1}(-f_1^{-1} + 1)}.
\end{align*}

Next,
\begin{align*}
x \circ y &= h_1 f_1 \circ h_2 f_2\\
&= (h_1f_1)(h_2f_2) \cdot [h_2f_2, h_1f_1]^{1/2}\\
&= \left(h_1 h_2^{f_1} f_1 f_2 \right) \left(h_2^{f_2^{-1}(-1 + f_1^{-1})} h_1^{f_1^{-1}(-f_2^{-1} + 1)}\right)^{1/2}\\
&= h_1 h_2^{f_1} \left(h_2^{f_2^{-1}(-1 + f_1^{-1})} h_1^{f_1^{-1}(-f_2^{-1} + 1)}\right)^{\frac{f_1f_2}{2}} f_1 f_2\\
&= h_1^{1 + \frac{f_2 \left(-f_2^{-1} + 1 \right)}{2}} h_2^{f_1 + \frac{f_1 \left(-1 + f_1^{-1} \right)}{2}} f_1 f_2\\
&= h_1^{\frac{1+f_2}{2}} h_2^{\frac{1+f_1}{2}} f_1f_2.
\end{align*}

To compute $x \backslash y$, observe that
\begin{align*}
x \circ \left( h_1^{-1 - f_1^{-1}f_2} h_2^2 \right)^{(1+f_1)^{-1}} f_1^{-1}f_2 &= h_1 f_1 \circ \left( h_1^{-1 - f_1^{-1}f_2} h_2^2 \right)^{(1+f_1)^{-1}} f_1^{-1}f_2\\
&= h_1^{\frac{1 + f_1^{-1}f_2}{2}} \left( h_1^{-1 - f_1^{-1}f_2} h_2^2 \right)^{(1+f_1)^{-1} \left(\frac{1 + f_1}{2}\right)} f_1 f_1^{-1}f_2\\
&= h_1^{\frac{1+ f_1^{-1} f_2}{2} + \frac{-1 - f_1^{-1} f_2}{2}} h_2^{2/2} f_2\\
&= h_2 f_2\\
&= y,
\end{align*}
and thus $x \backslash y = \left( h_1^{-1 - f_1^{-1}f_2} h_2^2 \right)^{(1+f_1)^{-1}} f_1^{-1}f_2$.

Finally,
\footnotesize
\begin{align*}
uL_{x,y} &= (x \circ y) \backslash ((u \circ x) \circ y)\\
&= \left( h_1^{\frac{1+f_2}{2}} h_2^{\frac{1+f_1}{2}} f_1f_2 \right) \backslash \left(\left((h^{\frac{1+f_1}{2}} h_1^{\frac{1+f}{2}} ff_1\right) \circ h_2f_2\right)\\
&= \left( h_1^{\frac{1+f_2}{2}} h_2^{\frac{1+f_1}{2}} f_1f_2 \right) \backslash \left( \left(h^{\frac{1 + f_1}{2}} h_1^{\frac{1 + f}{2}}\right)^{\frac{1 + f_2}{2}} h_2^{\frac{1 + ff_1}{2}} ff_1 f_2 \right)\\
&= \left( h_1^{\frac{1+f_2}{2}} h_2^{\frac{1+f_1}{2}} f_1f_2 \right) \backslash \left( h^{\frac{(1+f_1)(1+f_2)}{4}} h_1^{\frac{(1+f)(1+f_2)}{4}} h_2^{\frac{1 + ff_1}{2}} ff_1f_2 \right)\\
&= \left( \left(h_1^{\frac{1+f_2}{2}} h_2^{\frac{1+f_1}{2}}\right)^{-1 - (f_1f_2)^{-1} (ff_1f_2)} \left(h^{\frac{(1+f_1)(1+f_2)}{4}} h_1^{\frac{(1+f)(1+f_2)}{4}} h_2^{\frac{1 + ff_1}{2}}\right)^2 \right)^{(1 + f_1f_2)^{-1}} (f_1f_2)^{-1}(ff_1f_2)\\
&= \left( \left(h_1^{\frac{1+f_2}{2}} h_2^{\frac{1+f_1}{2}}\right)^{-1 - f} \left(h^{\frac{(1+f_1)(1+f_2)}{2}} h_1^{\frac{(1+f)(1+f_2)}{2}} h_2^{1 + ff_1}\right) \right)^{(1 + f_1f_2)^{-1}} f\\
&= \left( h^{\frac{(1+f_1)(1+f_2)}{2}} h_1^{\frac{1 + f_2}{2} (-1 -f) + \frac{(1 + f)(1 + f_2)}{2}} h_2^{\frac{1 + f_1}{2} (-1 -f) + (1 + ff_1)} \right)^{(1 + f_1f_2)^{-1}} f\\
&= \left(h^{\frac{(1 + f_1)(1 + f_2)}{2}} h_1^0 h_2^{\frac{1 + ff_1 - f - f_1}{2}}\right)^{(1 + f_1f_2)^{-1}} f\\
&= \left( h^{(1+f_1)(1+f_2)} h_2^{1+ff_1 - f - f_1} \right)^{\frac{(1 + f_1f_2)^{-1}}{2}} f.
\end{align*}
\end{proof}

\begin{theorem}
Let $G$ be a split metabelian group of odd order. Then $(G, \circ)$ is an automorphic loop.
\label{splitmeta}
\end{theorem}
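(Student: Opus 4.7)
The plan is to verify that every inner mapping of $(G, \circ)$ is an automorphism. Since $(G, \circ)$ is commutative (it is a $\Gamma$-loop), the inner mapping $T_u = R_u L_u^{-1}$ is trivial and $R_{x,y}$ coincides with $L_{x,y}$, so $\Inn(G, \circ)$ is generated by $\{L_{x,y} : x, y \in G\}$. It therefore suffices to show that $L_{x,y}$ is an automorphism of $(G, \circ)$ for every $x, y \in G$.

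This reduces to a direct coordinate calculation using the closed form for $uL_{x,y}$ given in the last item of Lemma \ref{circprops}. Fix $x = h_1 f_1$, $y = h_2 f_2$, and let $u = hf$, $v = h'f'$. Applying the $\circ$-formula to $u \circ v$ and then the $L_{x,y}$-formula (with $h$ replaced by the $H$-part of $u \circ v$ and $f$ replaced by $ff'$) puts $(u \circ v)L_{x,y}$ in a standard form: an $H$-element raised to the exponent $\tfrac{(1+f_1f_2)^{-1}}{2}$, followed by $ff' \in F$. Applying the $L_{x,y}$-formula separately to $u$ and to $v$ and then applying the $\circ$-formula puts $(uL_{x,y}) \circ (vL_{x,y})$ in the same form. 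The $F$-components agree (both equal $ff'$), and for the $H$-components the coefficients of $h$ and $h'$ match by inspection; matching the coefficient of $h_2$ reduces to the identity
\[1 + ff'f_1 - ff' - f_1 \;=\; \tfrac{1+f'}{2}\bigl(1 + ff_1 - f - f_1\bigr) \,+\, \tfrac{1+f}{2}\bigl(1 + f'f_1 - f' - f_1\bigr)\]
in the commutative ring $\mathbb{Z}[\tfrac{1}{2}][F]$, which one checks by expansion.

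The main obstacle is organizational rather than conceptual. One must treat symbols like $(1 + f_1f_2)^{-1}$ and $\tfrac{1+f_2}{2}$ as genuine pairwise-commuting elements of $\Aut(H)$; Proposition \ref{commutingauts}, together with the odd-order hypothesis on $|G|$ (needed both to invoke the proposition and to give meaning to the ``$\tfrac{1}{2}$'' appearing in the exponents), legitimizes this. With these conventions in place every automorphism in sight commutes with every other, the computation becomes mechanical, and we conclude $L_{x,y} \in \Aut(G, \circ)$. Hence $\Inn(G, \circ) \le \Aut(G, \circ)$, and $(G, \circ)$ is an automorphic loop.
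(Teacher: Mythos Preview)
Your proposal is correct and follows essentially the same approach as the paper: reduce to showing each $L_{x,y}$ is a homomorphism via commutativity, then verify $(uL_{x,y})\circ(vL_{x,y}) = (u\circ v)L_{x,y}$ coordinate-wise using the closed forms of Lemma~\ref{circprops}. Your explicit isolation of the polynomial identity governing the $h_2$-coefficient is exactly the content of the paper's exponent simplification $h_2^{\frac{(1+ff_1-f-f_1)(1+g)}{2}+\frac{(1+gf_1-g-f_1)(1+f)}{2}} = h_2^{1-fg+fgf_1-f_1}$.
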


\begin{proof}
Since $(G, \circ)$ is commutative, for any $x, y \in G$, $L_{x,y} = R_{x,y}$ and $T_x = id_G$. Thus, to prove that $(G,\circ)$ is automorphic, it suffices to show that $L_{x,y}$ is a loop homomorphism. We must show that $uL_{x,y} \circ vL_{x,y} = (u \circ v)L_{x,y}$ for all $u,v,x,w \in G$. Thus, let $u = hf, v = kg, x = h_1f_1, y = h_2 f_2 \in G$. We first compute, by Lemma \ref{circprops}
\begin{align*}
&uL_{x,y} \circ vL_{x,y}\\
&= \left(\left( h^{(1+f_1)(1+f_2)} h_2^{1+ff_1 - f - f_1} \right)^{\frac{(1 + f_1f_2)^{-1}}{2}} f\right) \circ \left(\left( k^{(1+f_1)(1+f_2)} h_2^{1+gf_1 - g - f_1} \right)^{\frac{(1 + f_1f_2)^{-1}}{2}} g \right)\\
&= \left( \left( h^{(1+f_1)(1+f_2)} h_2^{1+ff_1 - f - f_1} \right)^{\frac{(1 + f_1f_2)^{-1}}{2}} \right)^{\frac{1 + g}{2}} \left( \left( k^{(1+f_1)(1+f_2)} h_2^{1+gf_1 - g - f_1} \right)^{\frac{(1 + f_1f_2)^{-1}}{2}}\right)^{\frac{1 + f}{2}} fg\\
&= \left( h^{\frac{(1+f_1)(1+f_2)(1+g)}{2}} k^{\frac{(1 + f_1)(1 + f_2)(1 +f)}{2}} h_2^{\frac{(1 + ff_1 - f - f_1)(1 + g)}{2} + \frac{(1 + gf_1 - g - f_1)(1 + f)}{2}}\right)^\frac{(1 + f_1f_2)^{-1}}{2} fg\\
&= \left( h^{\frac{(1+f_1)(1+f_2)(1+g)}{2}} k^{\frac{(1 + f_1)(1 + f_2)(1 +f)}{2}} h_2^{1 - fg + fgf_1 - f_1}\right)^\frac{(1 + f_1f_2)^{-1}}{2} fg.
\end{align*}

On the other hand,
\begin{align*}
&(u \circ v)L_{x,y}\\
&=\left(h^{\frac{1+g}{2}} k^{\frac{1+f}{2}} fg\right)L_{x,y}\\
&=\left( \left((h^{\frac{1+g}{2}} k^{\frac{1+f}{2}}\right)^{(1+f_1)(1+f_2)} h_2^{1+fgf_1 - fg - f_1} \right)^{\frac{(1 + f_1f_2)^{-1}}{2}} fg\\
&= \left( h^{\frac{(1 + g)(1+f_1)(1 + f_2)}{2}} k^{\frac{(1 + f)(1 + f_1)(1 + f_2)}{2}} h_2^{1+fgf_1 - fg - f_1} \right)^{\frac{(1 + f_1f_2)^{-1}}{2}} fg\\
&=uL_{x,y} \circ vL_{x,y}.
\end{align*}
\end{proof}

As an immediate corollary, we see that if $G$ is any group such that all groups of order $|G|$ are split metabelian, then $(G, \circ)$ is an automorphic loop. In particular, disregarding the cases where $G$ is abelian, we obtain the following.

\begin{corollary}
If $|G|$ is any one of the following (for distinct odd primes $p$ and $q$), then $(G,\circ)$ is automorphic.
\begin{itemize}
\item $pq$ (where $p$ divides $q - 1$)
\item $p^2 q$
\item $p^2 q^2$
\end{itemize}
\end{corollary}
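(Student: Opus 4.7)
The plan is to show that, in each of the three cases, every group $G$ of the listed order is either abelian or split metabelian in the sense defined at the start of Section 3, so that Theorem \ref{splitmeta} applies. (If $G$ is abelian, then $[y,x]=1$ for all $x,y$, so $(G,\circ) = G$ is itself abelian and trivially automorphic; so only the nonabelian case needs real work.) All three listed orders are odd because $p,q$ are odd primes, matching the hypothesis of Theorem \ref{splitmeta}. The common engine in every case is Sylow theory combined with Schur--Zassenhaus (which applies whenever the normal Sylow subgroup has order coprime to its index), producing a decomposition $G = H \rtimes F$ with both factors abelian of odd order.

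For $|G|=pq$ with $p\mid q-1$ (so in particular $p < q$), Sylow's theorem gives $n_q \mid p$ and $n_q \equiv 1 \pmod q$; since $p < q$ the only possibility is $n_q = 1$. Hence the Sylow $q$-subgroup $Q$ is normal and cyclic, and any Sylow $p$-subgroup $P$ gives $G = Q \rtimes P$ with both factors abelian.

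For $|G| = p^2 q$, the standard Sylow count shows that at least one Sylow subgroup is normal. Suppose otherwise: $n_p = q$ forces $q \equiv 1 \pmod p$ and hence $q > p$, while $n_q \in \{p, p^2\}$ forces either $q \mid p-1$ (impossible) or $q \mid (p-1)(p+1)$ with $q > p$, hence $q \mid p+1$ and $q = p+1$; this last equation has no solution with $p, q$ two distinct odd primes. So some Sylow is normal, Schur--Zassenhaus yields a complement, and both factors (of order $p^2$ or $q$) are abelian.

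For $|G| = p^2 q^2$, assume without loss of generality $p < q$. Then $n_q \in \{1, p, p^2\}$. The case $n_q = p$ gives $q \mid p - 1$, impossible since $q > p$; the case $n_q = p^2$ gives $q \mid (p-1)(p+1)$, and since $q > p-1$ we need $q \mid p+1$, forcing $q = p+1$, again ruled out for distinct odd primes. Hence $n_q = 1$, the Sylow $q$-subgroup $Q$ is normal, and Schur--Zassenhaus produces $G = Q \rtimes P$ with $P,Q$ of prime-square order and therefore abelian. The only mildly delicate step across the three cases is ruling out $q = p+1$; this is precisely where oddness of both primes is used, and it is the only place the argument is not a direct Sylow count.
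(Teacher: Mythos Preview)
Your proposal is correct and follows exactly the approach the paper intends: the paper states this result as an immediate corollary of Theorem~\ref{splitmeta}, relying on the (unproved in the paper) classification fact that every group of each listed order is abelian or split metabelian. You have simply supplied the standard Sylow/Schur--Zassenhaus verification of that fact, which the paper omits.
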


\begin{corollary}
Let $p$ and $q$ be distinct odd primes with $p$ dividing $q - 1$. Then there is exactly one nonassociative, commutative, automorphic loop of order $pq$.
\end{corollary}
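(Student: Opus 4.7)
I would handle existence and uniqueness separately, using the results already developed.

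\emph{Existence.} Since $p$ divides $q-1$, there is a nontrivial homomorphism $\mathbb{Z}_p \to \Aut(\mathbb{Z}_q)$ and hence a nonabelian split metabelian group $G = \mathbb{Z}_q \rtimes \mathbb{Z}_p$ of odd order $pq$. By Theorem~\ref{splitmeta}, $(G,\circ)$ is a commutative automorphic loop of order $pq$. To see that it is nonassociative, suppose otherwise; then $(G,\circ)$, being both commutative and associative, is an abelian group, so the Baer theorem stated just before Theorem~\ref{newloop} forces $G$ to have nilpotency class at most $2$. But every nonabelian group of order $pq$ (with $p<q$ odd primes) has trivial center and so fails to be nilpotent, a contradiction.

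\emph{Uniqueness.} Let $L$ be any nonassociative commutative automorphic loop of order $pq$. Since commutative automorphic loops are $\Gamma$-loops, $L$ is a $\Gamma$-loop of odd order $pq$. The one-to-one correspondence between $\Gamma$-loops and left Bruck loops of odd order quoted in the paper matches $L$ with a unique left Bruck loop $B$ of order $pq$. Invoking Glauberman's classical theorem on odd-order Bruck loops, $B \cong (H,\oplus)$ for a group $H$ of odd order $pq$ that is unique up to isomorphism, and running the Greer correspondence backwards (together with Theorem~\ref{newloop}) identifies $L$ with $(H,\circ)$. Only two isomorphism classes of groups of order $pq$ exist, namely $\mathbb{Z}_{pq}$ and the nonabelian $\mathbb{Z}_q \rtimes \mathbb{Z}_p$. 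The cyclic group yields $(H,\circ)=H$, which is associative; nonassociativity of $L$ therefore forces $H \cong \mathbb{Z}_q \rtimes \mathbb{Z}_p$, so $L$ is isomorphic to the loop built in the existence step.

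The step I expect to require the most care is the identification, for $L$ of odd order, of $L$ with $(H,\circ)$ for some group $H$: this rests on composing the Greer correspondence ($\Gamma$-loops $\leftrightarrow$ Bruck loops) with Glauberman's bijection (odd-order Bruck loops $\leftrightarrow$ odd-order groups), neither of whose full strength is spelled out in the excerpt. Once that identification is in hand, however, uniqueness of the nonassociative example collapses to the classical observation that there are exactly two isomorphism classes of groups of order $pq$ when $p \mid q-1$.
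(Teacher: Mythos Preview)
Your existence argument is fine and in fact supplies a detail the paper leaves implicit. The gap is in uniqueness: there is no ``Glauberman bijection'' between odd-order Bruck loops and odd-order groups. Glauberman's theorem, as quoted in the paper, only says that $(G,\oplus)$ is a Bruck loop for every uniquely $2$-divisible group $G$; it does not assert that every Bruck loop of odd order arises this way, nor that the group $H$ is determined by the Bruck loop. Indeed the map $G\mapsto (G,\oplus)$ is already non-injective: for odd $p$ the Heisenberg group of order $p^3$ and the elementary abelian group $\mathbb{Z}_p^3$ both yield $(G,\oplus)\cong\mathbb{Z}_p^3$, since the former has class $2$ and exponent $p$. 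So the sentence ``$B\cong(H,\oplus)$ for a group $H$ of odd order $pq$ that is unique up to isomorphism'' is not justified, and you correctly flagged this step as the delicate one.

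The paper closes this gap differently. It does use the Greer correspondence to pass from a $\Gamma$-loop $Q$ of order $pq$ to a Bruck loop $(Q,\oplus)$, but then it invokes the Kinyon--Nagy--Vojt\v{e}chovsk\'y classification \cite{KNV17}: up to isomorphism there is exactly one nonassociative Bruck loop of order $pq$. Since $(G,\oplus_\circ)$ for the nonabelian $G$ must be that unique loop, running the correspondence backwards forces $Q\cong(G,\circ)$. So the missing ingredient in your approach is precisely an external classification of Bruck loops of order $pq$, not a general group--loop bijection.
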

\begin{proof}
Let $G$ be a group of order $pq$.  Then $(G,\circ)$ is automorphic (Theorem \ref{splitmeta}).  Suppose $Q$ is a $\Gamma$-loop of order $pq$.  Then $(Q,\oplus)$ is a Bruck loop.  The only two options are (i) $(Q,\oplus)$ is abelian or (ii) $(Q,\oplus)$ is the unique nonassociative Bruck loop of order $pq$ \cite{KNV17}.  For (i), $Q=(Q,\oplus)$ and hence an abelian group (so automorphic).  For (ii), $(G,\oplus_\circ)=(Q,\oplus)$ must be the same nonassociative Bruck loop, and hence, $Q=(G,\circ)$.
\end{proof}

The only known examples where $(G,\circ)$ is not automorphic occur when $G$ is not metabelian.

\begin{conjecture}
Let $G$ be a uniquely 2-divisible group.  Then $(G,\circ)$ is automorphic if and only if $G$ is metabelian.
\end{conjecture}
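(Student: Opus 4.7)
The plan is to prove the two directions of the conjecture separately and to exploit the explicit computations already carried out for the split case. For the forward direction (metabelian implies automorphic), my first attempt would be to reduce an arbitrary metabelian $G$ to the split metabelian case of Theorem~\ref{splitmeta}. Since $G' \trianglelefteq G$ is abelian and $G/G'$ is abelian, Schur--Zassenhaus immediately provides a semidirect decomposition whenever $\gcd(|G'|,|G/G'|) = 1$, and in the odd-order case a Sylow decomposition of $G'$ reduces further until the remaining hard subcase is where $G'$ and $G/G'$ are both $p$-groups. To handle this central overlap, I would use the fact already proved in the excerpt that $Z(G) \leq Z(G,\circ)$: this should allow one to quotient out a central subgroup, apply an inductive hypothesis to $G/Z(G)$ (which is still metabelian), and lift automorphicity of $L_{x,y}$ back along the central extension.

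If a clean reduction to the split case fails, the alternative is a coordinate-free proof modeled on Lemma~\ref{circprops}. The key identities there — the formulas for $x^{1/2}$, $[x,y]$ and $x\circ y$ in terms of half-powers of commuting automorphisms — translate to the metabelian setting via the Hall--Witt identity, which collapses substantially because $[[a,b],[c,d]] = 1$. One would then rewrite $uL_{x,y}$ as a product of half-powers of commutators in $G'$ and verify $uL_{x,y} \circ vL_{x,y} = (u\circ v)L_{x,y}$ line by line, mirroring the split-case expansion but with $f_1, f_2, \ldots$ replaced by conjugation-by-$x_1,x_2,\ldots$ acting as commuting automorphisms of $G'$.

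For the reverse direction (automorphic implies metabelian) I would argue by contradiction: assume $G'' \ne 1$ and choose $a,b,c,d \in G$ with $[[a,b],[c,d]] \ne 1$. The plan is to exhibit $u,v,x,y$ built from $a,b,c,d$ for which $(u \circ v)L_{x,y}$ and $uL_{x,y} \circ vL_{x,y}$ disagree. Because $\circ$ and $L_{x,y}$ are both built from commutators and square roots, their first-order error (as one iterates the construction) vanishes in $G/G''$ by the metabelian quotient, so the leading discrepancy should lie in $G''$ itself, where by choice of $a,b,c,d$ it is nontrivial. A secondary route, available in odd order, is to pass through the Bruck loop correspondence of Theorem~\ref{newloop} and argue that automorphicity of $(G,\circ)$ forces $(G,\oplus)$ into a restricted class of Bruck loops whose naturally associated groups are metabelian.

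The main obstacle — and the reason the statement appears as a conjecture rather than a theorem — is the non-split case of the forward direction. The closed-form exponent arithmetic in Lemma~\ref{circprops} uses the decomposition $x = hf$ essentially, and a central-extension induction has to contend with the fact that $1/2$-th powers do not generally descend cleanly along quotient maps. I would expect the proof, if it exists, to require careful bookkeeping of half-integer powers of commuting automorphisms of $G'$, combined with an induction either on $|Z(G)|$ or on the orders of the Sylow subgroups of $G/G'$, with the full non-split $p$-group-over-$p$-group case probably needing a separate commutator-calculus argument.
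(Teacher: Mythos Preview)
The statement you are addressing is a \emph{Conjecture} in the paper, not a theorem; the paper gives no proof of it. The only evidence offered is Theorem~\ref{splitmeta} (the split metabelian, odd-order case) together with the remark that all known non-automorphic examples come from non-metabelian $G$. So there is no ``paper's own proof'' against which to compare your proposal. You yourself flag this (``the reason the statement appears as a conjecture rather than a theorem''), and what you have written is a strategy outline, not a proof.

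A few concrete issues with the outline as it stands. First, the conjecture is stated for arbitrary uniquely $2$-divisible groups, which need not be finite; your forward-direction plan leans on Schur--Zassenhaus, Sylow decompositions, and induction on $|Z(G)|$, none of which are available in the infinite setting, and Theorem~\ref{splitmeta} itself is only proved for odd order. Second, your central-extension induction for the $p$-over-$p$ case is not fleshed out: automorphicity of $(G/Z(G),\circ)$ does not obviously lift to $(G,\circ)$, since $L_{x,y}$ involves square roots that are sensitive to the extension data. Third, the reverse direction is essentially a hope: ``the leading discrepancy should lie in $G''$'' is plausible heuristics, but exhibiting specific $u,v,x,y$ for which $(u\circ v)L_{x,y}\neq uL_{x,y}\circ vL_{x,y}$ from an arbitrary nontrivial $[[a,b],[c,d]]$ would require a commutator expansion you have not supplied. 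Your observation that conjugation by elements of $G$ gives \emph{commuting} automorphisms of $G'$ when $G$ is metabelian is correct and is exactly the mechanism behind the split-case computation, so a coordinate-free version of Lemma~\ref{circprops} along those lines is a reasonable line of attack for the forward direction --- but it remains open.
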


For a general metabelian group $G$, we have the following results.
\begin{lemma}
Let $G$ be a uniquely $2$-divisible, metabelian group.  Then for all $x,y,z\in G$.
\begin{itemize}
\item $[[x,y]^{\f},z]=[[x,y],z]^{\f}$
\item $[x,y,z][z,x,y][y,z,x]=1$.
\label{3com}
\end{itemize}
\end{lemma}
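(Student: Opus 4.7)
The plan is to treat the two items separately, with the metabelian hypothesis $G''=1$ entering through the fact that $G'=[G,G]$ is abelian. Two auxiliary observations about uniquely $2$-divisible groups will be used throughout, and I would record them at the start: (i) conjugation commutes with taking square roots, i.e., $(a^{\f})^g = (a^g)^{\f}$, since both sides square to $a^g$ and square roots are unique; and (ii) if $a$ and $b$ commute, then so do $a^{\f}$ and $b$, because $b^{-1}a^{\f}b$ squares to $b^{-1}ab = a$ and hence must equal $a^{\f}$ by uniqueness.

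For the first item, I would set $c=[x,y]\in G'$ and show that both $[c^{\f},z]$ and $[c,z]^{\f}$ equal $c^{-\f}(c^z)^{\f}$. Since $G'$ is normal and abelian, $c$ and $c^z$ both lie in $G'$ and therefore commute, so by (ii) so do $c^{-\f}$ and $(c^z)^{\f}$. Squaring $c^{-\f}(c^z)^{\f}$ then yields $c^{-1}c^z=[c,z]$, whence $[c,z]^{\f}=c^{-\f}(c^z)^{\f}$. On the other hand, by (i), $[c^{\f},z]=c^{-\f}(c^{\f})^z=c^{-\f}(c^z)^{\f}$, and the two expressions match.

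For the second item, my plan is to begin from the Hall--Witt identity already recorded in the commutator identities lemma, namely $[x,y,z^x][z,x,y^z][y,z,x^y]=1$, and strip off the conjugating exponents using metabelicity. Writing $z^x = z[z,x]$ and expanding via $[a,uv]=[a,v][a,u]^v$ gives
\[
[x,y,z^x] \;=\; \bigl[[x,y],[z,x]\bigr] \cdot \bigl[[x,y],z\bigr]^{[z,x]}.
\]
The first factor is trivial because $[x,y]$ and $[z,x]$ both lie in the abelian $G'$, and the second factor equals $[x,y,z]$ because $[[x,y],z]\in G'$ is fixed under conjugation by $[z,x]\in G'$. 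Applying the analogous reduction cyclically to $[z,x,y^z]$ and $[y,z,x^y]$ delivers $[x,y,z][z,x,y][y,z,x]=1$.

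There is no substantial obstacle: each item reduces to careful application of the standard commutator manipulations from the preceding lemma together with (i) and (ii). The only step demanding genuine attention is keeping track of which conjugations can be canceled; at every such moment the cancellation reduces to the fact that $G'$ is abelian, which is exactly the metabelian hypothesis.
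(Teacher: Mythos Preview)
Your argument is correct. The paper actually states this lemma without proof, so there is no ``paper's own proof'' to compare against; presumably the authors regard both items as standard consequences of metabelicity. Your treatment supplies exactly the expected details: for the first item, the reduction to $c^{-\f}(c^z)^{\f}$ via your observations (i) and (ii) is clean and uses only that $G'$ is abelian and normal; for the second, specializing the Hall--Witt identity $[x,y,z^x][z,x,y^z][y,z,x^y]=1$ (recorded in the paper's commutator-identities lemma) and stripping the conjugating exponents using $G''=1$ is the canonical route. Nothing is missing.
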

\begin{theorem}
Let $G$ be uniquely $2-$divisible and metabelian.  Then  $\zeta^{2}(G) \trianglelefteq Z(G,\circ)$
\end{theorem}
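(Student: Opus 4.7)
The plan is to reduce the theorem to the statement that every $g\in\zeta^{2}(G)$ lies in $Z(G,\circ)$. Once this is established, normality is automatic: for $x,y\in\zeta^{2}(G)$, one has $[y,x]\in Z(G)\subseteq\zeta^{2}(G)$ (since $\zeta^{2}(G)/Z(G)$ is abelian), and the unique square root of any central element is again central (by uniqueness, since $c w c^{-1}$ is also a square root for any conjugator $c$), so $x\circ y=xy[y,x]^{1/2}\in\zeta^{2}(G)$. Hence $\zeta^{2}(G)$ is a subloop of $(G,\circ)$; being a subgroup of the abelian group $Z(G,\circ)$, it is automatically a normal subloop.

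The key group-theoretic inputs are three consequences of $g\in\zeta^{2}(G)$. \emph{(i)} For every $w\in G$, $[g,w]\in Z(G)$, directly from the definition of the second center. \emph{(ii)} $g$ commutes with every commutator $[a,b]$, i.e.\ $[\gamma_{2}(G),\zeta^{2}(G)]=1$; this follows by expanding
\[
[a^{-1}b^{-1}ab,g]=[a^{-1},g]^{b^{-1}ab}[b^{-1},g]^{ab}[a,g]^{b}[b,g]
\]
using the identity $[uv,g]=[u,g]^{v}[v,g]$ from the excerpt, and observing that each $[u,g]$ is central, so every conjugation twist disappears and the right-hand side collapses to $[a,g]^{-1}[b,g]^{-1}[a,g][b,g]=1$. \emph{(iii)} By the observation made in the proof of the lemma $Z(G)\leq Z(G,\circ)$, commuting elements have commuting square roots; so $g$ commutes with $[y,x]^{1/2}$, and $[x,g]^{1/2},[y,g]^{1/2}\in Z(G)$. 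Also, for commuting central $a,b$, uniqueness of square roots yields $(ab)^{1/2}=a^{1/2}b^{1/2}$.

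Equipped with these, I would then verify the four associativity conditions required for $g\in Z(G,\circ)$ — that $g\circ(x\circ y)$, $(g\circ x)\circ y$, $x\circ(g\circ y)$, and $(x\circ y)\circ g$ all agree — by direct expansion of $u\circ v=uv[v,u]^{1/2}$, exactly mimicking the calculations in the proof of $Z(G)\leq Z(G,\circ)$. At each stage, centrality of $[x,g]^{1/2}$ and $[y,g]^{1/2}$ collapses every conjugation superscript, and commutativity of $g$ with $[y,x]^{1/2}$ lets the square-root factor pass freely through $g$. I expect all four expressions to reduce to the common form $gxy[y,x]^{1/2}\bigl([x,g][y,g]\bigr)^{1/2}$, which will complete the proof.

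The main obstacle is bookkeeping in the expansions: nested expressions like $[xy[y,x]^{1/2},g]$ and $[y,gx[x,g]^{1/2}]$ must be unwound by repeated application of $[ab,c]=[a,c]^{b}[b,c]$ and $[a,bc]=[a,c][a,b]^{c}$, with (i)--(iii) invoked at each step to erase conjugation superscripts and split off central square roots. Worth noting is that the metabelian hypothesis is not actually needed for this result — the argument relies only on general properties of the second center — but the statement sits naturally alongside Lemma~\ref{3com} in this section.
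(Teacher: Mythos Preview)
Your proposal is correct and follows essentially the same route as the paper: verify directly that any $g\in\zeta^{2}(G)$ associates with all pairs in $(G,\circ)$, using centrality of each $[g,w]$ together with the fact that $g$ commutes with every commutator (and hence with every $[y,x]^{1/2}$). The paper economizes by checking only the single identity $y\circ(x\circ g)=(y\circ x)\circ g$ --- equivalently $gL_{x,y}=g$ --- since in a commutative loop this alone places $g$ in the center, and it omits your subloop/normality discussion; your side remark that the metabelian hypothesis is never actually invoked is also correct.
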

\begin{proof}
If $g\in \zeta^{2}(G)$, then it is clear that $gT_{x}=x$.  We show $gL_{x,y}=g$.  First, it is clear that  $[g,x,y]=1 \Leftrightarrow [x,g,y]=1 \Leftrightarrow [x,y,g]=1$.  Thus, we have $[g,x]y=y[g,x]$ and $[x,y]g=g[x,y]$.  Now, 
\begin{align*}
y\circ(x\circ g)&=yxg[g,x]^{\f}[xg,y]^{\f}[[x,g]^{\f},y]^{\f}\\
&=yxg[g,x]^{\f}[xg,y]^{\f}\\
&=yxg[g,x]^{\f}[x,y]^{\f}[g,y]^{\f}\\
&=yxg[x,y]^{\f}[g,y]^{\f}[g,x]^{\f}\\
&=yx[x,y]^{\f}g[g,yx]^{\f}\\
&=yx[x,y]^{\f}g[g,yx]^{\f}[g,[x,y]^{\f}]^{\f}\\
&=(y\circ x)\circ g
\end{align*}
Hence, $gL_{x,y}=g$.
\end{proof}

\begin{theorem}
Let $G$ be uniquely $2-$divisible and of nilpotency class $3$.  Then $Z(G,\circ)=\zeta^{2}(G)$.
\label{nil2}
\end{theorem}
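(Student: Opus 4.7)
The plan is to establish the reverse inclusion $Z(G,\circ)\subseteq \zeta^2(G)$, since the previous theorem already gives $\zeta^2(G)\trianglelefteq Z(G,\circ)$. First I would observe that nilpotency class $3$ automatically gives metabelianness: $[G',G'] \subseteq [\gamma_2(G),\gamma_2(G)]\subseteq \gamma_4(G)=1$. Thus $G'$ is abelian and $\gamma_3(G) = [G',G] \subseteq Z(G)$, so every three-fold commutator is central. These two facts allow me to simplify half-power expressions freely: for any $u\in G'$ and $v\in G$, one has $[u^{1/2},v] = [u,v]^{1/2}$ (as in Lemma \ref{3com}), and $u^v = u\,[u,v]$ with $[u,v]$ central.

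Take $g\in Z(G,\circ)$ and expand the identity $g\circ(x\circ y) = (g\circ x)\circ y$ using $a\circ b = ab[b,a]^{1/2}$ together with the standard commutator expansions $[ab,c]=[a,c]^b[b,c]$ and $[a,bc]=[a,c][a,b]^c$. On the left, the commutator block unpacks to $[x,g][x,g,y][y,g][y,x,g]^{1/2}$; on the right, after moving $y$ past $[x,g]^{1/2}$ (which produces an extra $[x,g,y]^{1/2}$), the block becomes $[x,g,y]^{-1/2}[y,x][y,g][y,g,x]$, and one also picks up $[x,g,y]^{1/2}$ from conjugation. Because $G'$ is abelian, everything in the products from that point on commutes; collecting the square-root versions and canceling the common factor $gxy\cdot[x,g]^{1/2}[y,x]^{1/2}[y,g]^{1/2}$ leaves the equation
\[
[x,g,y]^{1/4}\,[y,x,g]^{1/4} \;=\; [y,g,x]^{1/2},
\]
or, raising to the fourth power inside the abelian $G'$, $[x,g,y]\,[y,x,g] = [y,g,x]^{2}$.

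To finish, I combine two algebraic facts. First, Lemma \ref{3com} yields $[x,g,y]\,[y,x,g]\,[g,y,x] = 1$, hence $[x,g,y]\,[y,x,g] = [g,y,x]^{-1}$. Second, since $[y,g]=[g,y]^{-1}$ and $[[g,y],x]\in\gamma_3(G)\leq Z(G)$, the identity $[a^{-1},x] = [a,x]^{-1}$ (valid whenever $[a,x]$ is central, which follows immediately from $1=[aa^{-1},x]=[a,x]^{a^{-1}}[a^{-1},x]=[a,x][a^{-1},x]$) gives $[y,g,x] = [g,y,x]^{-1}$. Substituting these into the displayed equation produces $[y,g,x] = [y,g,x]^2$, and therefore $[y,g,x]=1$ for all $x,y\in G$. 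This is precisely the condition $g\in\zeta^{2}(G)$, completing the reverse inclusion.

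The main obstacle is the bookkeeping in step two: one has to track at least four distinct triple commutators $[x,g,y],\ [y,x,g],\ [y,g,x],\ [g,y,x]$ with fractional exponents $1/4$ and $1/2$, and correctly pick up the extra $[x,g,y]^{1/2}$ factor that arises when commuting $y$ past $[x,g]^{1/2}$. Getting any one of these wrong collapses the final equation into a tautology rather than into $[y,g,x]=[y,g,x]^2$, so care with these half-power moves and with the sign of $[a^{-1},x]$ is essential.
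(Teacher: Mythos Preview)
Your proof is correct and follows essentially the same strategy as the paper: expand an associator identity for $g\in Z(G,\circ)$, simplify using that $G'$ is abelian and $\gamma_3(G)\leq Z(G)$, and finish with the Jacobi-type identity of Lemma~\ref{3com}. The only difference is cosmetic: the paper expands $g\circ(x\circ y)=x\circ(y\circ g)$ and invokes the derived identity $(*)$ at the end, whereas you expand $g\circ(x\circ y)=(g\circ x)\circ y$, obtain the cleaner relation $[x,g,y]^{1/4}[y,x,g]^{1/4}=[y,g,x]^{1/2}$, and then combine the Jacobi identity with $[y,g,x]=[g,y,x]^{-1}$ directly.
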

\begin{proof}
By the previous theorem, we have $\zeta^{2}(G)\leq Z(G,\circ)$.  From Lemma \ref{3com}, we have $[y,x,z][z,y,x]=[y,[x,z]]$ by interchanging $x$ and $y$.  Thus, 
\begin{equation*}
[[y,x]^{\f},z]^{\f}[[z,y]^{\f},x]^{\f}=[y,[x,z]^{\f}]^{\f}. \qquad (*)
\end{equation*}
Let $g\in Z(G,\circ)$.  We show $[g,x,y]=1$ for all $x,y\in G$ and therefore, $g\in \zeta^{2}(G)$.  Since $g\in Z(G,\circ)$, we have $g\circ(x\circ y)=x\circ (y\circ g)$.  Hence, we have
\begin{align*}
gxy[y,x]^{\f}[xy,g]^{\f}[[y,x]^{\f},g]^{\f}&=xyg[g,y]^{\f}[yg,x]^{\f}[[g,y]^{\f},x]^{\f} & \Leftrightarrow \\
xyg[g,xy][y,x]^{\f}[xy,g]^{\f}[[y,x]^{\f},g]^{\f}&=xyg[g,y]^{\f}[yg,x]^{\f}[[g,y]^{\f},x]^{\f} & \Leftrightarrow \\
[g,xy][y,x]^{\f}[xy,g]^{\f}[[y,x]^{\f},g]^{\f}&=[g,y]^{\f}[yg,x]^{\f}[[g,y]^{\f},x]^{\f} & \Leftrightarrow \\
[g,xy]^{\f}[y,x]^{\f}[[y,x]^{\f},g]^{\f}&=[g,y]^{\f}[yg,x]^{\f}[[g,y]^{\f},x]^{\f} & \Leftrightarrow \\
[g,y]^{\f}[g,x]^{\f}[g,x,y]^{\f}[y,x]^{\f}[[y,x]^{\f},g]^{\f}&=[g,y]^{\f}[y,x]^{\f}[y,x,g]^{\f}[g,x]^{\f}[[g,y]^{\f},x]^{\f} & \Leftrightarrow \\
[g,x,y]^{\f}[[y,x]^{\f},g]^{\f}&=[y,x,g]^{\f}[[g,y]^{\f},x]^{\f} &\Leftrightarrow \\
[g,x,y]^{\f}&=[[y,x]^{\f},g]^{\f}[[g,y]^{\f},x]^{\f} &\Leftrightarrow \\
[g,x,y]^{\f}&=[y,[x,g]^{\f}]^{\f}\ (*) &\Leftrightarrow \\
[[g,x]^{\f},y]^{\f}[[g,x]^{\f},y]^{\f}[[x,g]^{\f},y]^{\f}&=1 &\Leftrightarrow \\
[[g,x]^{\f},y]^{\f}&=1 &\Leftrightarrow\\
[g,x,y]&=1.
\end{align*}
\end{proof}
\begin{corollary}
Let $G$ be uniquely $2-$divisible and of nilpotency class $3$.  Then $(G,\circ)$ is a commutative loop of nilpotency class $2$.
\end{corollary}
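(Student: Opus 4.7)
By Theorem \ref{nil2}, $Z(G,\circ) = \zeta^{2}(G)$, and since $G$ has class exactly $3$ we have $\zeta^{2}(G) \neq G$, so $(G,\circ)$ is not an abelian group. The plan is therefore to show that the quotient $(G,\circ)/\zeta^{2}(G)$ is an abelian group. Since $(G,\circ)$ is commutative, this reduces to identifying the quotient with the ordinary group quotient $G/\zeta^{2}(G)$, which is abelian because $G$ has class $3$.

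First I would record two consequences of the class-$3$ hypothesis: (i) $G$ is metabelian, since $[\gamma_{2}(G), \gamma_{2}(G)] \leq \gamma_{4}(G) = 1$, so Lemma \ref{3com} applies; (ii) $[G,G] \leq \zeta^{2}(G)$ and $\gamma_{3}(G) \leq Z(G)$, both instances of the standard duality $\gamma_{i+1}(G) \leq \zeta^{n-i}(G)$ in a class-$n$ group.

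The heart of the argument, and what I expect to be the main technical step, is to show that $[y,x]^{1/2} \in \zeta^{2}(G)$ for all $x,y \in G$; equivalently, $[[y,x]^{1/2}, z] \in Z(G)$ for every $z \in G$. Lemma \ref{3com} gives
\begin{equation*}
[[y,x]^{1/2}, z] = [[y,x], z]^{1/2},
\end{equation*}
and by (ii) the inner commutator $[[y,x],z]$ lies in $\gamma_{3}(G) \leq Z(G)$. It therefore suffices to observe that $Z(G)$ is closed under taking square roots: for $w \in Z(G)$ and $c = w^{1/2}$, both $c$ and $gcg^{-1}$ are square roots of $w$ for any $g \in G$, so uniqueness of square roots in the uniquely $2$-divisible group $G$ forces $c = gcg^{-1}$, hence $c \in Z(G)$.

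With $[y,x]^{1/2} \in \zeta^{2}(G)$ in hand, the remainder is quick: $x \circ y = xy[y,x]^{1/2} \equiv xy \pmod{\zeta^{2}(G)}$ for all $x,y \in G$, so the $\circ$-cosets of $\zeta^{2}(G)$ coincide with the ordinary cosets in $G$ and the induced operation on $G/\zeta^{2}(G)$ is the group multiplication. Since $G/\zeta^{2}(G)$ is abelian, $(G,\circ)/Z(G,\circ)$ is an abelian group, whence $(G,\circ)$ is nilpotent of class exactly $2$.
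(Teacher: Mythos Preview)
Your proof is correct and follows the same strategy as the paper: use Theorem~\ref{nil2} to identify $Z(G,\circ)$ with $\zeta^2(G)$, then argue that the quotient $(G,\circ)/Z(G,\circ)$ coincides with the abelian group $G/\zeta^2(G)$. The paper's proof is two sentences and simply asserts that ``since $G/\zeta^2(G)$ is an abelian group, the two sets have the same operation''; you have supplied the detail the paper suppresses, namely that $[y,x]^{1/2}\in\zeta^2(G)$, so that $x\circ y\equiv xy\pmod{\zeta^2(G)}$ and the group projection $G\to G/\zeta^2(G)$ is a $\circ$-homomorphism with kernel $Z(G,\circ)$.
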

\begin{proof}
We have as sets, $G/\zeta^{2}(G)=(G,\circ)/Z(G,\circ)$ by Theorem \ref{nil2}.  Now, since $G/\zeta^{2}(G)$ is an abelian group, the two sets have the same operation and thus, $(G,\circ)/Z(G,\circ)$ is an abelian group.
\end{proof}

Finally, we give an alternative proof of Baer's result that if $(G,\circ)$ is an abelian group, then $G$ is of nilpotency class at most 2.
\begin{corollary}
Let $G$ be uniquely $2-$divisible.  If $(G,\circ)$ is an abelian group, then $G$ is of class at most $2$.
\label{newbaer}
\end{corollary}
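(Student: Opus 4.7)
The plan is to chain Theorem \ref{moufang} with the corollary immediately preceding this one. An abelian group is automatically a commutative Moufang loop (the Moufang identity $xy\cdot zx = x(yz\cdot x)$ is trivially satisfied by associativity and commutativity). Thus if $(G,\circ)$ is an abelian group, Theorem \ref{moufang} forces $G$ to be 2-Engel.

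Next, I would appeal to the classical theorem of Levi that every 2-Engel group is nilpotent of class at most $3$. So $G$ is nilpotent with class in $\{1,2,3\}$. Suppose for contradiction that $G$ has class exactly $3$. Then the corollary just above this one implies that $(G,\circ)$ is a commutative loop of nilpotency class $2$, which is incompatible with $(G,\circ)$ being an abelian group (nilpotency class at most $1$). Therefore $G$ must have class at most $2$.

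The main obstacle is essentially bookkeeping rather than computation: the substantive work has already been absorbed into Theorem \ref{moufang} (producing the 2-Engel law from commutative-Moufang behaviour of $(G,\circ)$) and into the preceding class-$3$ corollary (ruling out the top case in Levi's range). The only external ingredient is Levi's bound on the nilpotency class of a 2-Engel group, which is classical and requires no new calculation here. One could in principle avoid citing Levi by expanding the associator $(x\circ y)\circ z = x\circ(y\circ z)$ directly and using $[x,y,y]=1$ to collapse it to $[x,y,z]=1$, but this would merely re-derive a special case of Levi's theorem and is unnecessary given the machinery already in place.
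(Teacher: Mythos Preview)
Your argument is correct and follows essentially the same route as the paper: abelian $\Rightarrow$ commutative Moufang $\Rightarrow$ $2$-Engel via Theorem~\ref{moufang}, then Levi's bound gives class at most $3$, and the class-$3$ case is eliminated using the analysis around Theorem~\ref{nil2}. The only difference is cosmetic: the paper invokes Theorem~\ref{nil2} directly rather than its corollary, observing that if $G$ has class~$3$ then $Z(G,\circ)=\zeta^{2}(G)$, and since $(G,\circ)$ is abelian this forces $G=\zeta^{2}(G)$, i.e.\ class at most~$2$.

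One small caution: your contradiction step reads the preceding corollary as asserting that $(G,\circ)$ has nilpotency class \emph{exactly} $2$, but the proof of that corollary only shows $(G,\circ)/Z(G,\circ)$ is abelian, i.e.\ class \emph{at most} $2$. To get the strict inequality you need the extra observation (immediate from Theorem~\ref{nil2}) that when $G$ has class exactly $3$ one has $Z(G,\circ)=\zeta^{2}(G)\neq G$. Citing Theorem~\ref{nil2} directly, as the paper does, sidesteps this wrinkle.
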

\begin{proof}
Since $(G,\circ)$ is an abelian group, $(G,\circ)$ is a commutative Moufang loop. Thus, $G$ is $2-$Engel, which implies $G$ is of class at most $3$. Thus, by Theorem \ref{nil2}, $G = \zeta^2(G)$, and hence $G$ has class at most 2.

\end{proof}

\begin{acknowledgment}
Some investigations in this paper were assisted by the automated deduction tool, \textsc{Prover9}, and the finite model builder, \textsc{Mace4}, both developed by McCune \cite{mccune09}.  Similarly, all presented examples were found and verified using the GAP system \cite{GAP} together with the LOOPS package \cite{GAPNV}.  We also thank the anonymous referee for several comments that improved the manuscript. 

\end{acknowledgment}

\bibliographystyle{amsplain}

\end{document}